\newcommand{\Topcat}{\mathbf{Top}}
\newcommand\PreTopcat{\mathbf{PreTop}}
\newcommand\adFrm{\mathbf{adFrm}}
\newcommand\Frm{\mathbf{Frm}}
\newcommand{\identity}[1]{\mathrm{id}_{#1}}
\newcommand\Term{{\mathbf 1}}
\newcommand\con{\mathsf{con}}
\newcommand\tot{\mathsf{tot}}
\newcommand\fof{\mathsf{sup}}
\newcommand\cou{\mathsf{sub}}
\newcommand\logleq{\unlhd}
\newcommand\logwedge{\mathbin{\overline{\wedge}}}
\newcommand\biglogwedge{\mathop{\overline{\bigwedge}}}
\newcommand\logvee{\mathbin{\overline{\vee}}}
\newcommand\biglogvee{\mathop{\overline{\bigvee}}}
\newcommand\ff{\overline{\mathit{ff}}}
\newcommand\tru{\overline{\mathit{tt}}}
\newcommand\eqdef{\mathrel{\buildrel \text{def}\over=}}
\newcommand\Open{\mathcal O}
\newcommand\Al{\mathcal A}
\newcommand\adO{\Open^{ad}}
\newcommand\pt{\textbf{pt}\,}
\newcommand\adpt{\textbf{pt}^{ad}\,}
\newcommand\OmegaL{\Omega\mskip-5mu L}
\newcommand\Ind{\textbf{Ind}}
\newcommand\Discr{\textbf{Discr}}
\newcommand\dsqcup{\bigsqcup\nolimits^{\scriptstyle\uparrow}}
\newcommand\dcup{\bigcup\nolimits^{\scriptstyle\uparrow}}
\newcommand\fcap{\bigcap\nolimits^{\scriptstyle\downarrow}}
\newcommand\upc{\mathop{\uparrow}}
\newcommand\dc{\mathop{\downarrow}}
\newcommand\diff{\smallsetminus}
\newtheorem{theorem}{Theorem}[section]
\newtheorem{proposition}[theorem]{Proposition}
\newtheorem{corollary}[theorem]{Corollary}
\newtheorem{lemma}[theorem]{Lemma}
\newtheorem{fact}[theorem]{Fact}
\newtheorem{remark}[theorem]{Remark}
\newtheorem{deflem}[theorem]{Definition and Lemma}
\newtheorem{definition}[theorem]{Definition}
\newcommand\up[1]{\textcolor{red}{#1}}
\newcommand\down[1]{\textcolor{blue}{#1}}
\newcommand\both[1]{\textcolor{purple}{#1}}
\DeclareRobustCommand{\VAN}[3]{#2} 
\author{Jean Goubault-Larrecq}
\title{Stone Duality for Preordered Topological Spaces}
\date{\today}
\begin{document}
\maketitle

\begin{abstract}
  A preordered topological space is a topological space with a
  preordering.  We exhibit a Stone-like duality for preordered
  topological spaces, inspired by a similar duality for bitopological
  spaces, due to Jung-Moshier and Jakl, and by a duality for
  preordered sets due to Bonsangue, Jacobs and Kok.
\end{abstract}

\section{Introduction}
\label{sec:introduction}

This work grew out of discussions with Nesta van der Schaaf on his
work on \emph{ordered locales} \citep{vdS:ordered:locales}, see also
\citet{HvdS:ordered:locales}.  Ordered locales are meant as a theory
of ordered topological spaces without points, in the same sense that
locale theory is a theory of topological spaces without points; the
novelty is the addition of a partial ordering.  Their strategy is to
equip locales with a partial ordering, and their main result is an
adjunction between the category of ordered topological spaces
satisfying a property that the authors call ``with open cones and
enough points'', and the category of ordered locales satisfying a
certain property ($\bullet$) \citep[Theorem~6.3]{HvdS:ordered:locales}.
The property ($\bullet$) is a bit awkward, as it mentions the points
of the locales, something that one usually tries to avoid in
specifying properties of locales.

This paper proposes another adjunction between (pre)ordered
topological spaces and another form of enrichment of the notion of
locales.  (Well, really three adjunctions, depending on whether we
want to be able about the future, the past, or both.)  We do not need
to restrict to any subcategory of (pre)ordered topological spaces, and
there is no condition mentioning points on the localic side.  Although
the definitions will appear to be complex, the general idea is pretty
simple, borrowing a notion of Stone duality for preordered \emph{sets}
from \citet{BJK:oframes} and combining it with another one for
bitopological spaces from \cite{JM:HM}, with improvements due to
\citet{Jakl:dframes}.  I will explain it in more detail in
Section~\ref{sec:outl-prel}.

There are several alternate proposals for dualities for certain
preordered topological spaces.  Notably, the category of compact
pospaces, with continuous order-preserving maps as morphisms, is
equivalent to the category of stably compact spaces, with perfect maps
as morphisms \citep{Nachbin:pospace,Nachbin:toporder}.  (A
\emph{pospace} is a topological space with a partial ordering whose
graph is closed in the product.  We will not define the other terms.)
The usual Stone adjunction $\Open \dashv \pt$ restricts to an
equivalence between the category of compact pospaces and the opposite
of the category of stably continuous frames, although one has to
beware that the morphisms are continuous maps, not perfect maps on the
first category.  There is a similar adjunction that works with perfect
maps and perfect locale morphisms (see \citet{Escardo:corefl:scomp}
and the references given therein, including earlier work by
\citet{BB:scframes}), and combining it with Nachbin's equivalence
gives a duality for compact pospaces.  The present approach does not
require compactness, and will work for all preordered topological
spaces, not just pospaces.

\section{Basic idea, preliminaries, and outline}
\label{sec:outl-prel}

We write $\Topcat$ for the category of topological spaces and
continuous maps.  Given any topological space $X$, its open subsets
form a frame $\Open X$.  A \emph{frame} is a complete lattice in which
arbitrary suprema distribute over binary infima.  A frame homomorphism
is a map between frames that preserves arbitrary suprema and finite
infima.  Frame together with frame homomorphisms form a category
$\Frm$.  Its opposite category $\Frm^{op}$ is the category of
\emph{locales}.  There is a functor
$\Open \colon \Topcat \to \Frm^{op}$.  We have already defined it on
objects.  Its action on morphisms $f \colon X \to Y$ is given by
$\Open f \eqdef f^{-1}$.  In the converse direction, there is a
functor $\pt \colon \Frm^{op} \to \Topcat$ that is right-adjoint to
$\Open$---in notation, $\Open \dashv \pt$.  This is (a modern reading
of) \emph{Stone duality}, see \citet[Section~8.1]{JGL-topology}.

A preordered topological space $(X, \leq)$ is a topological space $X$
with a preordering $\leq$.  We will also simply write $X$ for
$(X, \leq)$, and $\leq_X$ for its preordering.  We propose a form a
Stone duality for such spaces, based on the following ideas.

A preordering on a set is described equivalently as its collection of
upwards-closed subsets, which is an Alexandroff topology, namely a
topology in which arbitrary intersections of open sets are open.  A
Stone-like duality for such space was described by
\citet[Section~6.4]{BJK:oframes}: the duals of Alexandroff spaces
proposed there is given by completely distributive lattices, with
complete lattice homomorphisms as morphisms.  (See Lemma~6.5 in
\citep{BJK:oframes}, realizing that what the authors call complete
lattices generated by their M-prime elements are the same thing as
completely distributive lattices.)  A \emph{completely distributive
  lattice} is a complete lattice in which
$\bigwedge_{i \in I} \bigvee_{j \in J_i} x^i_j = \bigvee_{f \in
  \prod_{i \in I} J_i} \bigwedge_{i \in I} x^i_{f (i)}$ for every
family ${(x^i_j)}_{i \in I, j \in J_i}$, where $I$ and $J_i$ are
arbitrary sets.  $\prod_{i \in I} J_i$ is the space of all maps that
send every element $i \in I$ to an element of $J_i$.  The notations
$\vee$ and $\bigvee$ denote suprema, and $\wedge$ and $\bigwedge$
denote infima.  Complete lattice homomorphisms preserve all suprema
and all infima.

With the viewpoint of preordered sets as spaces with an Alexandroff
topology, a preordered topological space is a special kind of
bitopological space, namely a space with two topologies.  Stone-like
duals of bitopological spaces were proposed by several authors, and
notably by \citet{JM:HM} and by \citet{Jakl:dframes}: taking Jakl's
way of presenting them, those are \emph{d-frames}, namely a pair of
frames linked by two relations $\con$ (consistency) and $\tot$
(totality) satisfying certain conditions.  We will mix the two
approaches.

There are several variants to this.  The second topology, which should
be the Alexandroff topology of the preordering $\leq$, can be
complemented with a third topology, namely the Alexandroff topology of
the opposite preordering $\geq$.  We will show \up{in red} what we
need if we use the second topology but not necessarily the third,
\down{in blue} what happens if we use the third topology but not
necessarily the second, and \both{in purple} what happens if we
include both, and their interactions.  Hence you can read this paper
with goggles that will let you read only ordinary text and red text,
or only ordinary text and blue text, or only ordinary, red and blue
text, or everything, and you will get one theory for unordered spaces
plus three different theories for ordered spaces, all dealt with at
the same time.  One may think of \up{red} text as a theory of looking
into the \up{future}, \down{blue} text as a theory of looking into the
\down{past}, and \both{purple} as a theory that combines the two.

We will rely on standard references for concepts in order theory and
topology \cite{JGL-topology,GHKLMS:contlatt,AJ:domains}.  A
\emph{poset} is a partially ordered set.  A \emph{directed family} in
a poset $P$ is a non-empty family ${(x_i)}_{i \in I}$ such that any
two elements $x_i$, $x_j$ have a common upper bound $x_k$.  A
\emph{downwards-closed} subset $A$ in $P$ is one such that $y \in A$
and $x \leq y$ imply $x \in A$, where $\leq$ is the ordering of $P$.
A \emph{Scott-closed} subset of $P$ is one that is downwards-closed
and closed under suprema of directed families.  An
\emph{upwards-closed} subset $A$ in $P$ is one such that $x \in A$ and
$x \leq y$ imply $y \in A$.  When we deal with several partial
orderings on the same set $P$, we will explicitly mention it, as in
$\leq$-downwards-closed or $\leq$-Scott-closed.

The plan of the paper is as follows.  We implement our strategy
described above and define ad-frames in Section~\ref{sec:ad-frames}.
We show that there is an adjunction $\adO \dashv \adpt$ between
preordered topological spaces and ad-frames in
Section~\ref{sec:ado-dashv-adpt}.  Analogously to sobrification, a
natural construction arising from the usual Stone duality adjunction
$\Open \dashv \pt$, we explore ad-sobrification in
Section~\ref{sec:ad-sobrification}.  We compare it to sobrification in
Section~\ref{sec:sobr-ad-sobr}.  Just like $\Open \dashv \pt$, we show
that the adjunction $\adO \dashv \adpt$ adjunction is idempotent in
Section~\ref{sec:ad-sobr-adjunct}.  One selling point of van der
Schaaf's notion of ordered locales is that his adjunction lifts the
adjunction $\Open \dashv \pt$.  We show that a similar situation
occurs with $\adO \dashv \adpt$ in Section~\ref{sec:ado-dashv-adpt-1}.

\section{Ad-frames}
\label{sec:ad-frames}

Following the strategy set forth in Section~\ref{sec:outl-prel}, we
define an \emph{ad-frame} as a tuple
$(\Omega, L, \up{\tot, \con}, \down{\fof, \cou})$ where $\Omega$ is a
frame, $L$ is a completely distributive lattice, and
$\tot, \con \subseteq \Omega \times L$ are binary relations satisfying
certain properties listed below.  
We will sometimes call \up{$\tot$ the totality relation, $\con$ the
  consistency relation}, \down{$\fof$ the containment relation, and
  $\cou$ the inclusion relation}.  In order to state them, we need to
introduce the following notation, inspired from \citet{Jakl:dframes}:
\begin{itemize}
\item we write $\leq$ for the ordering on both $\Omega$ and $L$,
  $\wedge$ for binary infima, $\bigwedge$ for arbitrary infima, $\top$
  for their top element, $\vee$ for binary suprema, $\bigvee$ for
  arbitrary suprema, $\bot$ for their bottom element; we also write
  $\geq$ for the opposite of $\leq$;
\item we write $\sqsubseteq$ for the ordering $\leq \times \leq$ on
  $\Omega \times L$ (the ``information ordering''); the corresponding
  infima and  suprema are written as $\sqcap$, $\bigsqcap$, $\sqcup$,
  $\bigsqcup$; the top element is $(\top, \top)$ and the bottom
  element is $(\bot, \bot)$;
\item we write $\logleq$ for $\leq \times \geq$ (the ``logical
  ordering'') on $\Omega \times L$; the corresponding infima and
  suprema are written as $\logwedge$, $\biglogwedge$, $\logvee$,
  $\biglogvee$; the top element is $\tru \eqdef (\top, \bot)$ and the
  bottom element is $\ff \eqdef (\bot, \top)$.
\end{itemize}
With those notations, we make the following definition.
\begin{definition}
  \label{defn:adframe}
  A \emph{ad-frame} is a tuple
  $(\Omega, L, \up{\tot, \con}, \down{\fof, \cou})$ where $\Omega$ is
  a frame, $L$ is a completely distributive lattice, and:
  \begin{itemize}
  \item \up{$\tot$ is an $\sqsubseteq$-upwards-closed subset of
      $\Omega \times L$ containing $\ff$, $\tru$ and closed under
      $\logwedge$ and $\biglogvee$;}
  \item \up{$\con$ is a $\sqsubseteq$-Scott-closed subset of
      $\Omega \times L$ containing $\ff$, $\tru$ and closed under
      $\logwedge$ and $\biglogvee$;}
  \item \down{$\fof$ is an $\logleq$-upwards-closed subset of
      $\Omega \times L$ containing $(\bot, \bot)$, $(\top, \top)$ and
      closed under $\sqcap$ and $\bigsqcup$;}
  \item \down{$\cou$ is a $\logleq$-Scott-closed subset of
      $\Omega \times L$ containing $(\bot, \bot)$, $(\top, \top)$ and
      closed under $\sqcap$ and $\bigsqcup$;}
  \item the following \emph{interaction laws} are satisfied:
    \begin{itemize}
    \item \up{for all $(u, a) \in \con$ and $(v, b) \in \tot$ such
        that $u=v$ or $a=b$, $(u, a) \sqsubseteq (v, b)$;}
    \item \down{forall $(u, a) \in \cou$ and $(v, b) \in \fof$ such
        that $u=v$ or $a=b$, $(u, a) \logleq (v, b)$;}
    \item \both{for every $(u, a) \in \con \cap \cou$, $u = \bot$;}
    \item \both{for every $(v, b) \in \tot \cap \fof$, $v = \top$;}
    \item \both{for every $(v, b) \in \con \cap \fof$, $b = \bot$;}
    \item \both{for every $(u, a) \in \tot \cap \cou$, $a = \top$.}
    \end{itemize}
  \end{itemize}
\end{definition}
The best way to understand Definition~\ref{defn:adframe} is by looking
at the following canonical example of ad-frames.  More generally, it
is recommended to understand an ad-frame as a form of abstraction of a
preordered topological space, where $\Omega$, $M$, $\up{\tot}$,
$\up{\con}$, $\down{\fof}$, $\down{\cou}$ are to be understood as
follows.

\begin{deflem}
  \label{deflem:adO}
  For every preordered topological space $X$, we define $\adO X$ as
  $(\Omega, L, \up{\tot, \con}, \down{\fof, \cou})$ where:
  \begin{itemize}
  \item $\Omega$ is the lattice $\Open X$ of open subsets of $X$;
  \item $L$ is the collection of $\leq_X$ -upwards-closed subsets of
    $X$;
  \item \up{for every $(U, A) \in \Omega \times L$, $(U, A) \in \tot$ if
      and only if $U \cup A = X$;}
  \item \up{for every $(U, A) \in \Omega \times L$, $(U, A) \in \con$ if
      and only if $U \cap A = \emptyset$;}
  \item \down{for every $(U, A) \in \Omega \times L$, $(U, A) \in
      \fof$ if and only if $U \supseteq A$;}
  \item \down{for every $(U, A) \in \Omega \times L$, $(U, A) \in
      \cou$ if and only if $U \subseteq A$.}
  \end{itemize}
  Then $\adO X$ is an ad-frame.
\end{deflem}
Before we proceed, let us observe that $(U, A) \in \fof$ if and only
if $U \cup \neg A = X$ and $(U, A) \in \cou$ if and only if
$U \cap \neg A = X$, where $\neg A$ denotes the complement of $A$ in
$X$.  While the elements $A \in L$ are the $\leq_X$-upwards-closed
subsets of $X$, the elements $\neg A$ where $A \in L$ are the
$\leq_X$-downwards-closed subsets of $X$, and $\fof$ and $\cou$ are
the relations between $\Omega$ and
$\{\neg A \mid A \in L\} \cong L^{op}$ that mimic $\tot$ and $\con$
between $\Omega$ and $L$.

\begin{proof}
  $\Omega$ is obviously a frame, and it is easy to see that $L$ is a
  completely distributive lattice.  Explicitly, in $L$ suprema are
  unions and infima (not just finite infima) are intersections.  Then
  complete distributivity boils down to the equality
  $\bigcap_{i \in I} \bigcup_{j \in J_i} A^i_j = \bigcup_{f \in
    \prod_{i \in I} J_i} \bigcap_{i \in I} A^i_{f (i)}$, where $A^i_j$
  is $\leq_X$-upwards-closed, and this is a classic instance of the
  axiom of choice.

  Let us take the notations $\sqsubseteq$, $\logleq$, etc.\ as given
  above.

  \up{It is easy to see that $\tot$ is $\sqsubseteq$-upwards closed:
    if $U \cup A = X$ and $U \subseteq V$, $A \subseteq B$, then
    $V \cup B = X$.  It contains $\ff = (\emptyset, X)$ and
    $\tt = (X, \emptyset)$.  For all $(U, A), (V, B) \in \tot$,
    $(U \cap V) \cup (A \cup B) = (U \cup A \cup B) \cap (V \cup A
    \cup B) = X \cap X = X$, so
    $(U, A) \logwedge (V, B) = (U \cap V, A \cup B) \in \tot$.  For
    every family of pairs $(U_i, A_i) \in \tot$, where $i \in I$, we
    claim that
    $\biglogvee_{i \in I} (U_i, A_i) = (\bigcup_{i \in I} U_i,
    \bigcap_{i \in I} A_i)$ is in $\tot$.  For every $x \in X$, either
    $x \in \bigcup_{i \in I} U_i$, or $x$ is in no $U_i$; since
    $(U_i, A_i) \in \tot$, $x$ must be in $A_i$, and this for every
    $i \in I$; hence $x \in \bigcap_{i \in I} A_i$.}

  \down{The fact that $\fof$ is $\logleq$-upwards closed, that it
    contains $(\bot, \bot)$ and $(\top, \top)$ and is closed under
    $\sqcap$ and $\bigsqcup$ is similar, up to replacing $A$ by
    $\neg A$ for every $A \in L$.  Explicitly, if $(U, A) \in \fof$
    (namely $U \supseteq A$) and $U \subseteq V$ and $A \supseteq B$,
    then $V \supseteq B$.  $\bot$ ($=\emptyset$) contains $\bot$, so
    $(\bot, \bot) \in \fof$ and similarly $(\top, \top) \in \fof$
    (where $\top=X$).  If $U \supseteq A$ and $V \supseteq B$ then
    $U \cap V \supseteq A \cap B$, so $\fof$ is closed under $\sqcap$.
    If $U_i \supseteq A_i$ for every $i \in I$, then
    $\bigcup_{i \in I} U_i \supseteq \bigcup_{i \in I} A_i$, so $\fof$
    is closed under $\bigsqcup$.}
  
  \up{Let us check that $\con$ is $\sqsubseteq$-Scott-closed.  If
    $(U, A) \sqsubseteq (V, B)$, namely if $U \subseteq V$ and
    $A \subseteq B$, and if $(V, B) \in \con$, namely if $V$ and $B$
    are disjoint, then so are $U$ and $A$.  Hence $\con$ is
    downwards-closed.  Let $(U_i, A_i)$ be a $\sqsubseteq$-directed
    family in $\con$.  Hence $U_i$ and $A_i$ are disjoint for every
    $i \in I$.  We observe that
    $\dsqcup_{i \in I} (U_i, A_i) = (\dcup_{i \in I} U_i, \dcup_{i \in
      I} A_i)$ must be in $\con$, namely that $\dcup_{i \in I} U_i$
    and $\dcup_{i \in I} A_i$ cannot intersect.  Otherwise, let $x$ be
    in the intersection.  Then $x \in U_i$ and $x \in A_j$ for some
    $i, j \in I$.  By directedness, we can find $k \in I$ such that
    $(U_i, A_i), (U_j, A_j) \sqsubseteq (U_k, A_k)$; in particular,
    $x \in U_k$ and $x \in A_k$, which is impossible since $U_k$ and
    $A_k$ are disjoint.}

  \down{Similarly, $\cou$ is $\logleq$-Scott-closed.  If
    $(U, A) \logleq (V, B)$, namely if $U \subseteq V$ and
    $A \supseteq B$, and if $(V, B) \in \cou$, namely if
    $V \subseteq B$, then $U \subseteq A$, so $\cou$ is
    $\logleq$-downwards-closed.  For every directed family
    ${(U_i, A_i)}_{i \in I}$ in $\cou$ (namely, $U_i \subseteq A_i$
    for every $i \in I$), its supremum with respect to $\logleq$ is
    $(U, A)$ where $U \eqdef \dcup_{i \in I} U_i$ and
    $A \eqdef \fcap_{i \in I} A_i$, and then $U \subseteq A$.  Indeed,
    for every $x \in U$, for every $i \in I$, it suffices to show that
    $x \in A_i$.  Since $x \in U$, $x \in U_j$ for some $j \in I$.  By
    directedness, there is a $k \in I$ such that
    $(U_i, A_i), (U_j, A_j) \logleq (U_k, A_k)$.  In particular
    $U_j \subseteq U_k$, so $x \in U_k$, and $A_i \supseteq A_k$; but
    $U_k \subseteq A_k$, so $x \in A_i$.}

  \up{The relation $\con$ contains $\ff = (\emptyset, X)$ and
    $\tt = (X, \emptyset)$.  For all $(U, A), (V, B) \in \con$,
    $U \cap A = \emptyset$ and $V \cap B = \emptyset$, so
    $(U \cap V) \cap (A \cup B) = (U \cap V \cap A) \cup (U \cap V
    \cap B) = \emptyset \cup \emptyset = \emptyset$, showing that
    $(U, A) \logwedge (V, B) = (U \cap V, A \cup B)$ is in $\con$.
    For every family $(U_i, A_i)$ of elements of $\con$, where
    $i \in I$, we claim that
    $\bigsqcup_{i \in I} (U_i, A_i) = (\bigcup_{i \in I} U_i,
    \bigcap_{i \in I} A_i)$ is in $\con$.  Otherwise, there would be a
    point $x \in \bigcap_{i \in I} A_i$ that is in
    $\bigcup_{i \in I} U_i$, hence in some $U_i$.  But since
    $(U_i, A_i) \in \con$, $x$ cannot be in $A_i$ after all, which is
    impossible.}

  \down{The relation $\cou$ contains
    $(\bot, \bot) = (\emptyset, \emptyset)$ and
    $(\top, \top) = (X, X)$, since $\emptyset \supseteq \emptyset$ and
    $X \supseteq X$.  For all $(U, A), (V, B) \in \cou$, we have
    $U \subseteq A$ and $V \subseteq B$, so
    $U \cap V \subseteq A \cap B$, showing that $\cou$ is closed under
    $\sqcap$.  For every family of pairs $(U_i, A_i) \in \cou$,
    $i \in I$,
    $\bigcup_{i \in I} U_i \subseteq \bigcup_{i \in I} A_i$, so $\cou$
    is closed under $\bigsqcup$.}
  
  Finally, we check the interaction laws.
  \begin{itemize}
  \item \up{Let $(U, A) \in \con$ and $(V, B) \in \tot$.  Hence $U$ is
      disjoint from $A$ and $V \cup B = X$.  If $U=V$, then every
      $x \in A$ is outside $U$, hence outside $V$, hence must be in
      $B$; so $A \subseteq B$ and therefore
      $(U, A) \sqsubseteq (V, B)$.  If $A=B$, then every $x \in U$ is
      outside $A$, hence outside $B$, hence must be in $V$; so
      $U \subseteq V$ and therefore $(U, A) \sqsubseteq (V, B)$.}
  \item \down{Let $(U, A) \in \cou$ and $(V, B) \in \fof$.  Hence
      $U \subseteq A$ and $V \supseteq B$.  If $U=V$, then
      $A \supseteq B$, so $(U, A) \logleq (V, B)$.  If $A=B$, then
      $U \subseteq V$, so $(U, A) \logleq (V, B)$.}
  \item \both{Let $(U, A) \in \con \cap \cou$.  Then $U \cap A =
      \emptyset$ and $U \subseteq A$, so $U = \emptyset$.}
  \item \both{Let $(V, B) \in \tot \cap \fof$.  Then $V \cup B = X$
      and $V \supseteq B$, so $V=X$.}
  \item \both{Let $(V, B) \in \con \cap \fof$.  Then $V \cap B = \emptyset$
      and $V \supseteq B$, so $B = \emptyset$.}
  \item \both{Let $(U, A) \in \tot \cap \cou$.  Then $U \cup A = X$
      and $U \subseteq A$, so $A = X$.}
  \end{itemize}
\end{proof}

\begin{definition}
  \label{defn:ad:mor}
  An \emph{ad-frame homomorphism} from an ad-frame
  $(\Omega, L, \up{\tot, \con}, \allowbreak \down{\fof, \cou})$ to an
  ad-frame $(\Omega', L', \up{\tot', \con'}, \down{\fof', \cou'})$ is
  a pair $(\varphi, p)$ where:
  \begin{itemize}
  \item $\varphi$ is a frame homomorphism from $\Omega$ to $\Omega'$,
    namely $\varphi$ preserves arbitrary suprema and finite infima;
  \item $p$ is a complete lattice homomorphism from $L$ to $L'$,
    namely preserves arbitrary suprema and arbitrary infima;
  \item \up{the pair $(\varphi, p)$ preserves totality: for every
      $(u, a) \in \tot$, $(\varphi (u), p (a)) \in \tot'$;}
  \item \up{the pair $(\varphi, p)$ preserves consistency: for every
      $(u, a) \in \con$, $(\varphi (u), p (a)) \in \con'$;}
  \item \down{the pair $(\varphi, p)$ preserves containment: for every
      $(u, a) \in \fof$, $(\varphi (u), p (a)) \in \fof'$;}
  \item \down{the pair $(\varphi, p)$ preserves inclusion: for every
      $(u, a) \in \cou$, $(\varphi (u), p (a)) \in \cou'$.}
  \end{itemize}
\end{definition}
Ad-frames and ad-frame homomorphisms form a category that we will
write as $\adFrm$.

Preordered topological spaces form a category $\PreTopcat$, whose
objects are the preordered topological spaces and whose morphisms are
the continuous preorder-preserving maps---continuous with respect to
the underlying topological spaces, and preserving the preorder, namely
monotonic with respect to the given preorderings.
\begin{lemma}
  \label{lemma:ad:mor}
  For every continuous, order-preserving map $f \colon X \to Y$
  between preordered topological spaces, the pair
  $\adO f \eqdef (f^{-1}, f^{-1})$ is an ad-frame homomorphism from
  $\adO Y$ to $\adO X$.  Here $f^{-1}$ denotes the function that maps
  every subset (whether open or $\leq_Y$-upwards-closed) of $Y$ to its
  inverse image under $f$.
\end{lemma}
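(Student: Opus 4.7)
The plan is to verify each of the six clauses in Definition~\ref{defn:ad:mor} directly, exploiting the fact that inverse images commute with all the Boolean operations on subsets of $X$ and $Y$ used to describe $\adO$ in Definition and Lemma~\ref{deflem:adO}. The only two non-routine observations are (i) that $f^{-1}$ actually lands in the right codomain on each coordinate and (ii) preservation of suprema and infima in the lattice of upwards-closed sets.

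First I would check that $f^{-1}$ is a well-defined frame homomorphism $\Open Y \to \Open X$: preimages of opens are open by continuity of $f$, and $f^{-1}$ always preserves arbitrary unions and finite intersections (and sends $Y$ to $X$). Next I would verify that $f^{-1}$ restricts to a map on upwards-closed sets: if $A \subseteq Y$ is $\leq_Y$-upwards-closed and $x \leq_X x'$ with $x \in f^{-1}(A)$, then $f(x) \leq_Y f(x')$ by order-preservation of $f$, so $f(x') \in A$ and $x' \in f^{-1}(A)$. Since suprema and infima in the lattice of upwards-closed sets are unions and intersections, and $f^{-1}$ preserves arbitrary unions and arbitrary intersections of subsets, $f^{-1} \colon L_Y \to L_X$ is a complete lattice homomorphism.

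For the four relational clauses, everything reduces to the fact that $f^{-1}$ commutes with $\cup$, $\cap$, and $\subseteq$. Explicitly: if $(U,A) \in \tot$, so $U \cup A = Y$, then $f^{-1}(U) \cup f^{-1}(A) = f^{-1}(Y) = X$, hence $(f^{-1}(U), f^{-1}(A)) \in \tot$; if $(U,A) \in \con$, so $U \cap A = \emptyset$, then $f^{-1}(U) \cap f^{-1}(A) = f^{-1}(\emptyset) = \emptyset$; if $(U,A) \in \fof$, i.e.\ $U \supseteq A$, then $f^{-1}(U) \supseteq f^{-1}(A)$; and similarly for $\cou$ with $\subseteq$ in place of $\supseteq$.

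I do not expect any real obstacle: all six conditions follow either from the standard properties of inverse images or (for the single clause about $f^{-1}$ preserving upwards-closedness) from a one-line use of order-preservation. The only care required is to remember that $f^{-1}$ on upwards-closed sets must preserve \emph{arbitrary} infima, but this is immediate because infima in $L_X$ and $L_Y$ are just intersections of subsets of the ambient space, which $f^{-1}$ commutes with.
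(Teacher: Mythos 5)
Your proposal is correct and follows essentially the same route as the paper's proof: the relational clauses are verified by commuting $f^{-1}$ with unions, intersections and inclusions, exactly as in the paper. The only difference is that you spell out the parts the paper dismisses as clear (that $f^{-1}$ lands in opens by continuity and in upwards-closed sets by monotonicity, and preserves arbitrary infima because these are intersections), which is a welcome amount of extra care but not a different argument.
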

\begin{proof}
  Let $(\varphi, p) \eqdef \adO f$.  It is clear that $\varphi$ is a
  frame homomorphism and that $p$ is a complete lattice homomorphism.
  \up{For every total pair $(V, B)$ in $\adO Y$, $V \cup B = Y$, so
    $\varphi (V) \cup \varphi (B) = f^{-1} (V \cup B) = f^{-1} (Y)=X$.
    For every consistent pair $(V, B)$ in $\adO Y$,
    $V \cap B = \emptyset$, so
    $\varphi (V) \cap \varphi (B) = f^{-1} (V \cap B) = f^{-1}
    (\emptyset) = \emptyset$.}  \down{For every pair $(V, B)$ such
    that $V \subseteq B$, $f^{-1} (V) \subseteq f^{-1} (B)$, and
    similarly with $\supseteq$.}
\end{proof}

\begin{corollary}
  \label{corl:adO:func}
  $\adO$ is a functor from $\PreTopcat$ to $\adFrm^{op}$.
\end{corollary}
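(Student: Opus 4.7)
The plan is to verify the two functor axioms---identity preservation and composition preservation---now that $\adO$ has been defined on objects (Definition and Lemma~\ref{deflem:adO}) and on morphisms (Lemma~\ref{lemma:ad:mor}). Before doing so, I first need to say what composition of ad-frame homomorphisms in $\adFrm$ actually is: the natural definition is componentwise, namely $(\varphi', p') \circ (\varphi, p) \eqdef (\varphi' \circ \varphi, p' \circ p)$. I would briefly justify that this is indeed an ad-frame homomorphism, the only nonobvious part being that the four preservation clauses in Definition~\ref{defn:ad:mor} chain through composition; but each clause is of the form ``if $(u,a) \in R$ then $(\varphi(u), p(a)) \in R'$'', so two successive applications give the required conclusion for $R''$, and frame/complete-lattice-homomorphism structure is preserved by composition by standard facts.

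For the identity axiom, I would observe that $\mathrm{id}_X^{-1}$ is the identity on $\mathcal{P}(X)$, so $\adO(\identity{X}) = (\identity{X}^{-1}, \identity{X}^{-1})$ is the identity ad-frame homomorphism on $\adO X$. For the composition axiom, given $f \colon X \to Y$ and $g \colon Y \to Z$ in $\PreTopcat$, I would invoke the elementary set-theoretic identity $(g \circ f)^{-1} = f^{-1} \circ g^{-1}$ on arbitrary subsets of $Z$, applying it componentwise to open subsets and to $\leq_Z$-upwards-closed subsets. This gives $\adO(g \circ f) = (f^{-1} \circ g^{-1}, f^{-1} \circ g^{-1}) = \adO(f) \circ \adO(g)$ in $\adFrm$, which is $\adO(g) \circ \adO(f)$ in $\adFrm^{op}$, as required by contravariance.

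There is no real obstacle: the entire argument reduces to the well-known functoriality of $f \mapsto f^{-1}$, applied twice in parallel (once on open sets, once on upwards-closed sets). The only thing demanding care is bookkeeping: I must make sure that the direction of arrows lines up, so that ``$\adO f \colon \adO Y \to \adO X$ in $\adFrm$'' from Lemma~\ref{lemma:ad:mor} is correctly reread as ``$\adO f \colon \adO X \to \adO Y$ in $\adFrm^{op}$'', and that componentwise composition in $\adFrm$ reverses when transported to $\adFrm^{op}$.
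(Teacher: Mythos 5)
Your proof is correct and is exactly the routine verification the paper leaves implicit (the corollary is stated without proof, as an immediate consequence of Definition and Lemma~\ref{deflem:adO} and Lemma~\ref{lemma:ad:mor}). The reduction to $(g \circ f)^{-1} = f^{-1} \circ g^{-1}$ applied componentwise, together with the observation that the four preservation clauses chain under composition, is precisely what is needed.
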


\section{The $\adO \dashv \adpt$ adjunction}
\label{sec:ado-dashv-adpt}

The terminal object $\Term$ of $\PreTopcat$ is the one-element space
$\{*\}$ with the unique topology that one can put on it, and with the
only preordering that one can put on it.  Then both $\Open \Term$ and
the lattice of $\leq_\Term$-upwards-closed subsets of $\Term$ are
$\{\emptyset, \Term\}$, with $\emptyset$ below $\Term$.  It follows
that $\adO \Term$ is the four-element lattice
$\{\emptyset, \Term\}^2$, and we check easily that \up{all its pairs
  are total except $(\emptyset, \emptyset)$, that all its pairs are
  consistent except $(\Term, \Term)$}, \down{that all its pairs are in
  containment except $\ff = (\emptyset, \Term)$, and that al its pairs
  are in the inclusion relation except $\ff = (\Term, \emptyset)$.}

An ad-frame homomorphism $(\varphi, p)$ from an ad-frame
$(\Omega, L, \tot, \con)$ to $\adO \Term$ is entirely characterized by
$x \eqdef \varphi^{-1} (\Term)$ and by $s \eqdef p^{-1} (\Term)$.
Since $\varphi$ is a frame homomorphism, $x$ must be a completely
prime filter of elements of $\Omega$.  (A \emph{filter} of elements of
$\Omega$ is a subset $F$ of $\Omega$ that is upwards-closed and closed
under finite infima.  It is \emph{completely prime} if and only if its
complement is closed under arbitrary suprema, if and only if every
family ${(x_i)}_{i \in I}$ whose supremum is in $F$ contains some
$x_i$ in $F$.)  Since $p$ is a complete lattice homomorphism, $s$ must
be a completely prime \emph{complete} filter of elements of $L$, where
a complete filter is any subset of $L$ that is upwards-closed and
closed under arbitrary infima (not just finite infima).  The fact that
$(\varphi, p)$ must preserve \up{totality, consistency},
\down{containment and inclusion} reflects into the following
definition.
\begin{definition}
  \label{defn:adpt}
  A \emph{point} of an ad-frame
  $\OmegaL \eqdef (\Omega, L, \up{\tot, \con}, \down{\fof, \cou})$ is
  a pair $(x, s)$ where:
  \begin{itemize}
  \item $x$ is a completely prime filter of elements of $\Omega$;
  \item $s$ is a completely prime complete filter of elements of $L$;
  \item \up{every pair $(u, a) \in \tot$ is such that $u \in x$ or
      $a \in s$;}
  \item \up{every pair $(u, a) \in \con$ is such that $u \not\in x$ or
      $a \not\in s$;}
  \item \down{every pair $(u, a) \in \fof$ is such that $u \in x$ or
      $a \not\in s$;}
  \item \down{every pair $(u, a) \in \cou$ is such that $u \not\in x$
      or $a \in s$.}
  \end{itemize}
  Let $\adpt \OmegaL$ be the preordered topological space $X$:
  \begin{itemize}
  \item whose elements are the points of $\OmegaL$,
  \item whose open sets are the sets $\Open_u \eqdef \{(x, s) \in
    \adpt \OmegaL \mid u \in x\}$, where $u$ ranges over $\Omega$,
  \item and whose preordering $\leq_X$ is given by $(x, s) \leq_X (y,
    t)$ if and only if $s \subseteq t$.
  \end{itemize}
  We also write $\Al_a$ for $\{(x, s) \in \adpt \OmegaL \mid a \in
  s\}$ for every $a \in L$.
\end{definition}

\begin{remark}
  \label{rem:adpt:alt}
  (Can be skipped on first reading.)  It is well-known that a
  completely prime filter $x$ of elements of $\Omega$ is exactly the
  collection $\{u \in \Omega \mid u \not\leq p\}$ for some unique
  prime element $p$ of $\Omega$ ($p$ is prime if and only if
  $p \neq \top$ and for all $u, v \in \Omega$ such that
  $u \wedge v \leq p$, then $u \leq p$ or $v \leq p$).  A completely
  prime complete filter $s$ of elements of $L$ can similarly be
  written as $\{a \in L \mid a \not\leq q\}$ for some unique
  completely prime element $q$ of $L$ ($q$ is \emph{completely prime}
  if and only if for every family ${(a_i)}_{i \in I}$ in $L$ such that
  $\bigwedge_{i \in I} a_i \leq q$, then $a_i \leq q$ for some
  $i \in I$).  Alternatively, $s$ can also be expressed as $\upc b$
  for some unique completely coprime element $b$ of $L$ ($b$ is
  \emph{completely coprime} if and only if for every family
  ${(a_i)}_{i \in I}$ in $L$ such that $b \leq \bigvee_{i \in I} a_i$,
  then $b \leq a_i$ for some $i \in I$).  The relation between $q$ and
  $b$ is that for every $a \in L$, $b \leq a$ if and only if
  $a \not\leq q$ (if and only if $a \in s$).  For any two elements
  $q, b \in L$ (not just completely prime/completely coprime
  elements), let us write $q \pitchfork b$ if and only if every
  $a \in L$, $b \leq a$ if and only if $a \not\leq q$.  Then:
  \begin{itemize}
  \item for every $q \in L$, there at most one $b \in L$ such that $q
    \pitchfork b$: indeed, if there were two of them, $b$ and $b'$,
    then $b \leq a$ would be equivalent to $b' \leq a$ (and to $a
    \not\leq q$) for every $a \in L$, so $b \leq b'$ and $b' \leq b$,
    whence $b=b'$;
  \item symmetrically, for every $b \in L$, there is at most one $q
    \in L$ such that $q \pitchfork b$;
  \item if $q \pitchfork b$, then $q$ is completely prime and $b$ is
    completely coprime; indeed, if for every family
    ${(a_i)}_{i \in I}$ in $L$, if $a_i \leq q$ for no $i \in I$, then
    $b \leq a_i$ for every $i \in I$, hence
    $b \leq \bigwedge_{i \in I} a_i$, so
    $\bigwedge_{i \in I} a_i \not\leq q$; and similarly for showing
    that $b$ is completely coprime.
  \end{itemize}
  Then a point can be specified by a prime element $p$ of $\Omega$,
  and pair of elements $q \pitchfork b$ in $L$, under the proviso that
  \up{every pair $(u, a) \in \tot$ is such that $u \not\leq p$ or
    $b \leq a$, every pair $(u, a) \in \con$ is such that $u \leq p$
    or $b \not\leq a$},
  \down{every pair $(u, a) \in \fof$ is such that $u
    \not\leq p$ or $a \leq q$, and every pair $(u, a) \in \cou$ is
    such that $u \leq p$ or $a \not\leq q$.}
  Using the fact that \up{$\tot$ is $\sqsubseteq$-upwards-closed, that
    $\con$ is $\sqsubseteq$-downwards-closed}, \down{that $\fof$ is
    $\logleq$-upwards-closed, that $\cou$ is
    $\logleq$-downwards-closed}, these conditions simplify to:
  \begin{itemize}
  \item \up {for every $a \in L$, if $(p, a) \in \tot$ then
      $b \leq a$, and for every $u \in \Omega$, if $(u, b) \in \con$
      then $u \leq p$};
  \item \down{for every $a \in L$, if $(p, a) \in \fof$ then
      $a \leq q$, and for every $u \in \Omega$, if $(u, q) \in \cou$
      then $u \leq p$}.
  \end{itemize}
\end{remark}

\begin{remark}
  \label{rem:semiclosed}
  A preordered topological space $X$ is \emph{upper semi-closed}
  (resp., \emph{lower semi-closed}) if and only the upward downward
  closure $\upc_X x$ (resp., the upward closure $\dc_X x$) of every
  point $x$ with respect to $\leq_X$ is closed in $X$.  It is
  \emph{semi-closed} if and only if it is both upper and lower
  semi-closed.  Let us consider the following properties for an
  ad-frame
  $\OmegaL \eqdef (\Omega, L, \up{\tot, \con}, \down{\fof, \cou})$:
  \begin{description}
  \item[(usc)] \up{every element of $L$ is a supremum of elements of $\{a
    \mid (u, a) \in \tot \cap \con\}$;}
\item[(lsc)] \down{every element of $L$ is an infimum of elements of
    $\{a \mid (u, a) \in \fof \cap \cou\}$.}
  \end{description}
  When $\OmegaL$ is of the form $\adO X$ for some preordered
  topological space $X$, we have:
  \begin{itemize}
  \item \up{$X$ is upper semi-closed if and only if $\adO X$ satisfies
    (usc);}
  \item \down{$X$ is lower semi-closed if and only if $\adO X$ satisfies
    (lsc).}
  \end{itemize}
  \up{Indeed, the elements of $\{A \mid (U, A) \in \tot \cap \con\}$
    are exactly the $\leq_X$-upwards-closed, closed subsets of $X$.
    If $X$ is upper semi-closed, then every upwards-closed subset
    $A \in L$ is the union of the sets $\upc_X x$, $x \in A$, which
    are all in $\{A \mid (U, A) \in \tot \cap \con\}$.  Conversely, if
    every element of $L$ is a supremum of elements of
    $\{A \mid (U, A) \in \tot \cap \con\}$, then this holds in
    particular of subsets of the form $\upc_X x$, $x \in X$.  Then
    there must be a pair $(U, A) \in \tot \cap \con$ such that
    $A \subseteq \upc_X x$ and $x \in A$, and this forces
    $A = \upc_X x$; since $(U, A) \in \tot \cap \con$, $A$ is closed.
    Therefore $X$ is upper semi-closed.}  \down{If $X$ is lower
    semi-closed, then for every upwards-closed subset $A \in L$, its
    complement $X \diff A$ is the union of the sets $\dc_X x$,
    $x \in X \diff A$.  Hence $A$ is the intersection of the sets
    $X \diff \dc_X x$, $x \in X \diff A$.  All those sets are open and
    upwards-closed, hence in $\{A \mid (U, A) \in \fof \cap \cou\}$,
    since $(U, A) \in \fof \cap \cou$ if and only if $U=A$.
    Conversely, if every element of $L$ is an infimum of elements of
    $\{A \mid (U, A) \in \fof \cap \cou\}$, then in particular
    $X \diff \dc_X x$ is an intersection of open upwards-closed sets
    for every point $x \in X$.  Therefore $\dc_X x$ is a union of
    closed downwards-closed sets.  One of them---let us call it
    $C$---will contain $x$.  Since $C$ is downward-closed, it must
    contain $\dc_X x$, and by construction $C \subseteq \dc_X x$; so
    $\dc_X x = C$ is closed, showing that $X$ is lower semi-closed.}
  \both{In particular, $X$ is semi-closed if and only if $\adO X$
    satisfies both (usc) and (lsc).}
\end{remark}

\begin{lemma}
  \label{lemma:adpt}
  For every ad-frame
  $\OmegaL \eqdef (\Omega, L, \up{\tot, \con}, \down{\fof, \cou})$,
  $\adpt \OmegaL$ is a preordered topological space.  Its
  upwards-closed subsets are exactly the sets $\Al_a$, where
  $a \in L$.  The map $u \mapsto \Open_u$ is a surjective frame
  homomorphism from $\Omega$ to the lattice $\Open {\adpt \OmegaL}$ of
  open subsets of $\adpt \OmegaL$, and the map $a \mapsto \Al_a$ is a
  surjective complete lattice homomorphism from $L$ to the lattice of
  upwards-closed subsets of $\adpt L$.
\end{lemma}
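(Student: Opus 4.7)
The plan is to handle the four claims of the lemma in one sweep by carrying out the easy verifications first and then identifying the upwards-closed subsets of $\adpt \OmegaL$ with the sets $\Al_a$.

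First I will check that $\{\Open_u \mid u \in \Omega\}$ really is a topology on the set of points. Complete primeness of each filter $x$ gives $\bigcup_i \Open_{u_i} = \Open_{\bigvee_i u_i}$, the filter property gives $\Open_u \cap \Open_v = \Open_{u \wedge v}$, and the facts that any completely prime filter is proper (the empty supremum $\bot$ cannot lie in it) and contains $\top$ yield $\Open_\bot = \emptyset$ and $\Open_\top = \adpt \OmegaL$. These identities simultaneously prove that $u \mapsto \Open_u$ preserves arbitrary suprema and finite infima, so it is a frame homomorphism; surjectivity onto $\Open{\adpt \OmegaL}$ is built into the definition of the topology. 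Reflexivity and transitivity of $\leq_X$ are immediate from the definition $s \subseteq t$, so $\adpt \OmegaL$ is a preordered topological space.

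Next I verify that $a \mapsto \Al_a$ is a complete lattice homomorphism. The identity $\Al_{\bigwedge_i a_i} = \bigcap_i \Al_{a_i}$ uses both that each $s$ is a complete filter (for the nontrivial direction: if all $a_i \in s$ then so is their infimum) and that $s$ is upwards-closed. The identity $\Al_{\bigvee_i a_i} = \bigcup_i \Al_{a_i}$ uses complete primeness of $s$. That each $\Al_a$ is upwards-closed is immediate: $(x, s) \in \Al_a$ and $s \subseteq t$ force $a \in t$.

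The heart of the proof is the converse: every upwards-closed subset $U \subseteq \adpt \OmegaL$ has the form $\Al_a$. Here I will invoke Remark~\ref{rem:adpt:alt} to write each completely prime complete filter $s$ uniquely as $\upc b_s$ for some completely coprime $b_s \in L$ (such $b_s$ exists because $L$ is completely distributive). Set
\[ a_U \eqdef \bigvee_{(x, s) \in U} b_s. \]
The inclusion $U \subseteq \Al_{a_U}$ is automatic, since $b_s \leq a_U$ places $a_U$ in $\upc b_s = s$. For the reverse, if $(y, t) \in \Al_{a_U}$ then $a_U \in t$, equivalently $b_t \leq \bigvee_{(x, s) \in U} b_s$; complete coprimality of $b_t$ yields some $(x, s) \in U$ with $b_t \leq b_s$, i.e.\ $s \subseteq t$, i.e.\ $(x, s) \leq_X (y, t)$, and upwards-closure of $U$ then places $(y, t)$ in $U$. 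This simultaneously establishes the characterization of upwards-closed subsets and the surjectivity of $a \mapsto \Al_a$. The main obstacle is exactly this final inclusion $\Al_{a_U} \subseteq U$: it is the only place where complete distributivity of $L$ is genuinely used (through the existence of a completely coprime $b_t$ representing $t$), and where the upwards-closure of $U$ must be combined with the preorder on $\adpt \OmegaL$ via the alternative description of points from Remark~\ref{rem:adpt:alt}.
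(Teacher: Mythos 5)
Your proof is correct and follows essentially the same route as the paper's: both arguments identify each completely prime complete filter $s$ with its least element (your $b_s$ is just $\bigwedge s$) and realize an arbitrary upwards-closed set as $\Al$ applied to the join of these elements, your direct verification of $\Al_{a_U} \subseteq U$ being exactly the complete-primeness step the paper hides inside the identity $\bigcup_x \Al_{a_x} = \Al_{\bigvee_x a_x}$. One small misattribution: complete distributivity of $L$ is not what gives you $b_t$ --- its existence follows from $t$ being a complete filter (so $\bigwedge t \in t$ is a least element) and its complete coprimality from $t$ being completely prime --- so nothing in this lemma actually uses complete distributivity; this does not affect the validity of your argument.
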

\begin{proof}
  It is easy to see that $\Open_\top = X$,
  $\Open_u \cap \Open_v = \Open_{u \wedge v}$ for all
  $u, v \in \Omega$ (because every $x$ in a point $(x, s)$ of
  $\OmegaL$ is a filter),
  $\bigcup_{i \in I} \Open_{u_i} = \Open_{\bigvee_{i \in I} u_i}$ for
  every family ${(u_i)}_{i \in I}$ of elements of $\Omega$ (because
  every $x$ in a point $(x, s)$ of $\OmegaL$ is completely prime).  It
  follows that the collection of sets $\Open_u$, where $u \in \Omega$,
  is a topology on $\adpt \OmegaL$, and that $u \mapsto \Open_u$ is a
  surjective frame homomorphism.

  Let us write $X$ for $\adpt \OmegaL$, so that we have the notations
  $\leq_X$, $\dc_X$, $\upc_X$ at our disposal.

  Similarly, $a \mapsto \Al_a$ is a surjective complete lattice
  homomorphism from $L$ to the complete lattice
  $\Al \eqdef \{\Al_a \mid a \in L\}$.  Every element $\Al_a$ of $\Al$
  is upwards-closed with respect to $\leq_X$: for every
  $(x, s) \in \Al_a$, namely $a \in s$, for every point
  $(y, t) \in \adpt L$ such that $(x, s) \leq_X (y, t)$, namely such
  that $s \subseteq t$, we have $a \in s \subseteq t$, hence
  $(y, t) \in \Al_a$.  Conversely, we wish to show that every upwards
  closed subset $A$ of $X = \adpt L$ with respect to $\leq_X$ is of
  the form $\Al_a$ for some $a \in L$.  We start with the special case
  where $A$ is the upward closure $\upc_X (x, s)$ of a single point
  $(x, s) \in X$.  Let $a \eqdef \bigwedge s$.  Since $s$ is a
  complete filter, $a$ is in $s$, and is therefore the least element
  of $s$; it follows that $s$ is the upward closure of $a$ in $L$.
  Then $\Al_a$ is the collection of points $(y, t)$ such that
  $a \in t$, or equivalently such that $s \subseteq t$ (since $t$ is
  upwards closed in $L$), namely $\Al_a = \upc_X (x, s)$.  In the
  general case, let $A$ be any upwards closed subset of $X$ with
  respect to $\leq_X$.  Then $A$ is the union of the sets $\upc_X x$,
  where $x$ ranges over $A$.  Each set $\upc_X x$ can be written as
  $\Al_{a_x}$ for some $a_x \in L$.  Hence
  $A = \bigcup_{x \in A} \Al_{a_x} = \Al_{\bigvee_{x \in A} a_x}$.
\end{proof}

\begin{deflem}
  \label{deflem:eta}
  For every preordered space $X$, there is a continuous,
  order-preserving map $\eta_X \colon X \to \adpt {\adO X}$, which
  maps every $x \in X$ to the pair $(\mathcal N_x, \mathcal U_x)$,
  where $\mathcal N_x$ is the set of open neighborhoods of $x$ in $X$,
  and $\mathcal U_x$ is the collection of $\leq_X$-upwards-closed
  subsets of $X$ that contain $x$.
\end{deflem}
\begin{proof}
  We check that $(\mathcal N_x, \mathcal U_x)$ is a point of $\adO X$.
  It is clear that $\mathcal N_x$ is a completely prime filter of
  elements of $\Open X$.  $\mathcal U_x$ is clearly upwards-closed in
  the lattice $\mathcal A$ of $\leq_X$-upwards-closed subsets of $X$,
  and closed under arbitrary intersections.  If any union
  $\bigcup_{i \in I} A_i$ of $\leq_X$-upwards-closed subsets of $X$ is
  in $\mathcal U_x$, namely contains $x$, then some $A_i$ contains
  $x$, so $\mathcal U_x$ is completely prime.  \up{For every total
    pair $(U, A)$, namely $U \cup A = X$, we verify that
    $U \in \mathcal N_x$ or $A \in \mathcal U_x$: that simply means
    that $x \in U$ or $x \in A$, which is clear.  For every consistent
    pair $(U, A)$, namely $U \cap A = \emptyset$, we cannot have
    $U \in \mathcal N_x$ and $A \in \mathcal U_x$, namely we cannot
    have $x \in U$ and $x \in A$.}  \down{For every pair $(U, A)$ in
    containment, namely such that $U \supseteq A$, we verify that
    $U \in \mathcal N_x$ or $A \not\in \mathcal U_x$: if
    $A \in \mathcal U_X$, then $x \in A \subseteq U$, so
    $U \in \mathcal N_x$.  If instead $U \subseteq A$, then we verify
    that $U \not\in \mathcal N_X$ or $A \in \mathcal U_x$: if
    $U \in \mathcal N_x$, then $x \in U \subseteq A$, so
    $A \in \mathcal U_x$.}  Hence $(\mathcal N_x, \mathcal U_x)$ is a
  point of $\adO X$.

  For every $U \in \Open X$, ${(\eta_X)}^{-1} (\Open_U) = \{x \in X
  \mid U \in \mathcal N_x\} = U$, so $\eta_X$ is continuous.
  For all $x, y \in X$ such that $x \leq_X y$, $\mathcal U_x \subseteq
  \mathcal U_y$, since every $\leq_X$-upwards-closed set containing
  $x$ must also contain $y$.  By definition of the preordering on
  $\adpt {\adO X}$, $\eta_X (x)$ is smaller than or equal to $\eta_Y (y)$.
\end{proof}

There is a functor $\adpt$ from $\adFrm^{op}$ to $\PreTopcat$.  We
have already defined its action on objects in
Definition~\ref{defn:adpt}.  Given a morphism
$\phi \colon \OmegaL \to \Omega'$ in $\adFrm^{op}$, namely an ad-frame
homomorphism $\phi \eqdef (\varphi, p) \colon \OmegaL' \to \OmegaL$,
we define $\adpt \phi$ as mapping every point $(x, s)$ of $\OmegaL$ to
$(\varphi^{-1} (x), p^{-1} (s))$.  The latter type-checks in the sense
that $x$ is a subset of $\Omega$ and $s$ is a subset of $L$, where
$\OmegaL = (\Omega, L, \up{\tot, \con}, \down{\fof, \cou})$.
\begin{lemma}
  \label{lemma:dpt:func}
  $\adpt$ is a functor from $\adFrm^{op}$ to $\PreTopcat$.
\end{lemma}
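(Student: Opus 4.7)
The plan is to verify the three standard requirements: (i) for each morphism $\phi$ of $\adFrm^{op}$, the prescribed rule $(x,s) \mapsto (\varphi^{-1}(x), p^{-1}(s))$ lands in $\adpt \OmegaL'$, where $(\varphi, p) \colon \OmegaL' \to \OmegaL$ is the corresponding ad-frame homomorphism; (ii) the resulting map is continuous and order-preserving; and (iii) the assignment respects identities and composition.

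For well-definedness, given a point $(x,s)$ of $\OmegaL$, I would first recall the standard Stone-duality fact that if $\varphi$ preserves finite infima and arbitrary suprema then $\varphi^{-1}$ carries completely prime filters to completely prime filters; similarly, since $p$ preserves arbitrary infima \emph{and} arbitrary suprema, $p^{-1}(s)$ is a completely prime complete filter of $L'$. The four interaction clauses transfer mechanically by the preservation properties in Definition~\ref{defn:ad:mor}: e.g.\ if $(u', a') \in \tot'$, preservation of totality gives $(\varphi(u'), p(a')) \in \tot$, so the totality clause for the point $(x,s)$ yields $\varphi(u') \in x$ or $p(a') \in s$, i.e.\ $u' \in \varphi^{-1}(x)$ or $a' \in p^{-1}(s)$; the $\con'$, $\fof'$, $\cou'$ clauses are handled identically.

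For (ii), continuity follows from the computation $(\adpt \phi)^{-1}(\Open_{u'}) = \{(x,s) \mid \varphi(u') \in x\} = \Open_{\varphi(u')}$, which is open in $\adpt \OmegaL$ by Lemma~\ref{lemma:adpt}. Order-preservation is trivial: if $(x,s) \leq_{\adpt \OmegaL} (y,t)$ then $s \subseteq t$, hence $p^{-1}(s) \subseteq p^{-1}(t)$, so $\adpt\phi(x,s) \leq_{\adpt \OmegaL'} \adpt\phi(y,t)$.

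For (iii), the identity morphism of $\OmegaL$ in $\adFrm^{op}$ corresponds to $(\identity{\Omega}, \identity{L})$, and $(x,s) \mapsto (\identity{\Omega}^{-1}(x), \identity{L}^{-1}(s)) = (x,s)$ is the identity on $\adpt \OmegaL$. For composition, the only point needing care is the op-direction bookkeeping: a composable pair $\phi \colon \OmegaL \to \OmegaL'$, $\psi \colon \OmegaL' \to \OmegaL''$ in $\adFrm^{op}$ corresponds to $(\varphi,p) \colon \OmegaL' \to \OmegaL$ and $(\varphi', p') \colon \OmegaL'' \to \OmegaL'$ in $\adFrm$, whose composition $(\varphi \circ \varphi',\, p \circ p') \colon \OmegaL'' \to \OmegaL$ represents $\psi \circ \phi$. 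Then $(\varphi \circ \varphi')^{-1} = \varphi'^{-1} \circ \varphi^{-1}$ and $(p \circ p')^{-1} = p'^{-1} \circ p^{-1}$ give $\adpt(\psi \circ \phi) = \adpt \psi \circ \adpt \phi$. No step constitutes a genuine obstacle; the only thing to watch is the direction-reversal in $\adFrm^{op}$, which is resolved by the contravariance of the inverse-image operation.
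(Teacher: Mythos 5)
Your proposal is correct and follows essentially the same route as the paper's proof: well-definedness of $\adpt\phi(x,s)$ via preservation of the four relations, continuity via $(\adpt\phi)^{-1}(\Open_{u'})=\Open_{\varphi(u')}$, monotonicity via $p^{-1}$, and the routine identity/composition check. The only difference is one of emphasis---the paper writes out all four interaction clauses and dismisses functoriality as obvious, while you do the reverse---but both arguments are the same.
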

\begin{proof}
  Taking the notation above, and letting $\OmegaL'$ be
  $(\Omega', L', \up{\tot', \con'}, \down{\fof', \cou'})$, we first
  verify that $\adpt \phi (x, s)$ is a point of $\OmegaL'$.  Since
  $\varphi^{-1}$ and $p^{-1}$ commute with all unions and
  intersections, $\varphi^{-1} (x)$ is a completely prime filter, and
  $p^{-1} (s)$ is a completely prime complete filter.

  \up{For every total pair $(v, b)$ in $\Omega' \times L'$, we must
    show that $v \in \varphi^{-1} (x)$ or $b \in p^{-1} (s)$.  Since
    $\phi$ preserves totality, $(\varphi (v), p (b))$ is in $\tot'$.
    Since $(x, s)$ is a point, we must then have $\varphi (v) \in x$
    or $p (b) \in s$, which is what we wanted to prove.}
  
  \up{For every consistent pair $(v, b)$ in $\Omega' \times L'$, we
    must show that $v \not\in \varphi^{-1} (x)$ or
    $b \not\in p^{-1} (s)$.  Let us assume that
    $v \in \varphi^{-1} (x)$ and $b \in p^{-1} (s)$.  Then
    $\varphi (v) \in x$ and $p (b) \in s$.  Since $\phi$ preserves
    consistency and since $(v, b) \in \con$, $(\varphi (v), p (b))$ is
    in $\con'$.  But, since $(x, s)$ is a point, we must then have
    $\varphi (v) \not\in x$ or $p (b) \not\in s$, which is
    impossible.}

  \down{For every pair $(v, b) \in \fof'$, we must show that
    $v \in \varphi^{-1} (x)$ or $b \not\in p^{-1} (s)$.  Since
    $(\varphi, p)$ preserves containment, $(\varphi (v), p (b))$ is in
    $\fof$, and since $(x, s)$ is a point, $\varphi (v) \in x$ or
    $p (b) \not\in s$, which is what we wanted to prove.}

  \down{If $(v, b) \in \cou'$ instead, then we rely on the fact that
    $(\varphi, p)$ preserves inclusion, so that
    $(\varphi (v), p (b)) \in \cou$.  Since $(x, s)$ is a point,
    $\varphi (v) \not\in x$ or $p (b) \in s$, so
    $v \not\in \varphi^{-1} (x)$ or $b \in p^{-1} (s)$.}
  
  Next, we need to show that $\adpt \phi$ is continuous and
  order-preserving.  For every open subset $\Open_v$ of
  $\adpt {\OmegaL'}$, where $v \in \Omega'$,
  ${(\adpt \phi)}^{-1} (\Open_v)$ is the collection of points
  $(x, s) \in \adpt {\OmegaL}$ such that
  $\adpt \phi (x, s) \in \Open_v$, namely such that
  $v \in \varphi^{-1} (x)$, namely such that $\varphi (v) \in x$, and
  that is $\Open_{\varphi (v)}$.  Hence $\adpt \phi$ is continuous.
  For all points $(x, s)$ and $(y, t)$ of $\adpt {\OmegaL}$ such that
  $(x, s)$ is below $(y, t)$, namely such that $s \subseteq t$,
  $p^{-1} (s) \subseteq p^{-1} (t)$, so the point
  $\adpt \phi (x, s) = (\varphi^{-1} (x), p^{-1} (s))$ is below
  $(\varphi^{-1} (y), p^{-1} (t)) = \adpt \phi (y, t)$.  Hence
  $\adpt \phi$ is monotonic.

  The fact that $\adpt$ maps the identity morphisms to identity
  morphisms and preserves compositions is obvious.
\end{proof}

\begin{remark}
  \label{rem:semiclosed:pt}
  We have showed that the functor $\adO$ maps \up{upper semi-closed}
  preordered topological spaces to ad-frames satisfying \up{(usc)} and
  \down{lower semi-closed} preordered topological spaces to ad-frames
  satisfying \up{(lsc)} in Remark~\ref{rem:semiclosed}.  Conversely,
  for every ad-frame
  $\OmegaL \eqdef (\Omega, L, \up{\tot, \con}, \down{\fof, \cou})$, we
  claim that if $\OmegaL$ satisfies \up{(usc)}, then
  $X \eqdef \adpt \OmegaL$ is \up{upper semi-closed}.  Let
  $(x, s) \in X$.  Since $s$ is a complete filter of elements of $L$,
  it has a least element $a$ and $s$ is the collection of elements of
  $L$ that are larger than or equal to $a$.  By \up{(usc)}, $a$ is a
  supremum of elements $a'$ such that $(u', a') \in \tot \cap \con$
  for some $u' \in \Omega$.  But $s$ is completely prime, so one of
  those elements $a'$ is in $s$.  Since $a' \leq a$ and $a$ is least
  in $s$, we must conclude that $a=a'$, namely that there is an
  $u \in \Omega$ such that $(u, a) \in \tot \cap \con$.  We claim that
  $\upc_X (x, s)$ is the complement of $\Open_u$, and is therefore
  closed.  Indeed, for every point $(y, t)$,
  $(y, t) \in \upc_X (x, s)$ if and only if $s \subseteq t$, if and
  only if $a \in t$.  Since $(u, a) \in \tot \cap \con$, and by
  definition of points of $\OmegaL$, we have ($u \in y$ or $a \in t$)
  and ($u \not\in y$ or $a \not\in t$), namely, $a \in t$ if and only
  if $u \not\in y$.  Hence $(y, t) \in \upc_X (x, s)$ if and only if
  $u \not\in y$, if and only if $(y, t) \not\in \Open u$.

  Similarly, if $\OmegaL$ satisfies \down{(lsc)}, then $X$ is
  \down{lower semi-closed}.  The proof goes as follows.  Let
  $(x, s) \in X$.  Since $s$ is completely prime, the complement
  $\overline s$ of $s$ is closed under arbitrary suprema; therefore
  $\overline s$ has a largest element $a$.  By \down{(lsc)}, $a$ is an
  infimum of elements $a'$ such that $(u', a') \in \fof \cap \cou$ for
  some $u' \in \Omega$.  If all those elements were in $s$, then $a$
  would be in $s$, too, since $s$ is a complete filter.  Hence some of
  those elements $a'$ is in $\overline s$.  But $a \leq a'$, and since
  $a$ is largest in $\overline s$, $a' \leq a$, so $a=a'$.  Hence $a$
  itself is such that $(u, a) \in \fof \cap \cou$ for some
  $u \in \Open u$.  We claim that $\dc_X (x, s)$ is the complement of
  $\Open_u$, and is therefore closed.  Indeed, for every point
  $(y, t)$, $(y, t) \in \dc_X (x, s)$ if and only if $t \subseteq s$,
  if and only if $\overline s \subseteq \overline t$, if and only if
  $a \not\in t$.  Since $(u, a) \in \fof \cap \cou$, and by definition
  of points of $\OmegaL$, we have ($u \in y$ or $a \not\in t$) and
  ($u \not\in y$ or $a \in t$), namely, $a \in t$ if and only if
  $u \in y$.  Hence $(y, t) \in \dc_X (x, s)$ if and only if
  $u \not\in y$, if and only if $(y, t) \not\in \Open u$.
\end{remark}

\begin{proposition}
  \label{prop:adpt:adj}
  For every preordered topological space $X$, for every ad-frame
  $\OmegaL \eqdef (\Omega, L, \up{\tot, \con}, \down{\fof, \cou})$,
  for every continuous order-preserving map
  $f \colon X \to \adpt {\OmegaL}$, there is a unique ad-frame
  homomorphism $f^! \colon \OmegaL \to \adO X$ such that
  $\adpt {f^!} \circ \eta_X = f$.
\end{proposition}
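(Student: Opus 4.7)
The plan is to follow the template of classical Stone duality and define the candidate homomorphism $f^! \eqdef (\varphi, p)$ by taking preimages along $f$ of the canonical generators provided by Lemma~\ref{lemma:adpt}:
\[
  \varphi(u) \eqdef f^{-1}(\Open_u), \qquad p(a) \eqdef f^{-1}(\Al_a)
\]
for $u \in \Omega$ and $a \in L$. There are then five things to verify: (a) $\varphi$ is a frame homomorphism $\Omega \to \Open X$; (b) $p$ is a complete lattice homomorphism from $L$ into the lattice of $\leq_X$-upwards-closed subsets of $X$; (c) the preservation clauses of Definition~\ref{defn:ad:mor} hold; (d) the triangle $\adpt{f^!} \circ \eta_X = f$ commutes; (e) uniqueness.

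For (a) and (b), I would invoke Lemma~\ref{lemma:adpt}: the map $u \mapsto \Open_u$ is a frame homomorphism and $a \mapsto \Al_a$ is a complete lattice homomorphism, and $f^{-1}$ commutes with arbitrary unions and intersections, so the composites do likewise. The fact that each $p(a)$ is $\leq_X$-upwards-closed uses that $\Al_a$ is upwards-closed in $\adpt\OmegaL$ (again Lemma~\ref{lemma:adpt}) together with the hypothesis that $f$ is order-preserving.

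For (c), each clause is a direct point-wise transfer. Writing $f(x) = (x_\Omega, x_L)$ for $x \in X$, the hypothesis says $(x_\Omega, x_L)$ is a point of $\OmegaL$. For instance, given $(u, a) \in \tot$, the point condition forces $u \in x_\Omega$ or $a \in x_L$, i.e.\ $x \in f^{-1}(\Open_u) \cup f^{-1}(\Al_a)$, so $\varphi(u) \cup p(a) = X$, which is exactly the description of totality in $\adO X$ from Lemma~\ref{deflem:adO}. The three remaining clauses on $\con$, $\fof$, $\cou$ are handled identically, each reading off the matching clause of the point condition. The one bookkeeping subtlety is that the $\fof$/$\cou$ clauses in Definition~\ref{defn:adpt} are stated as ``$u \in x$ or $a \notin s$'' and ``$u \notin x$ or $a \in s$'', and must be lined up carefully with the ``$U \supseteq A$''/``$U \subseteq A$'' description in Lemma~\ref{deflem:adO}; once the translation is clear, each clause collapses to a one-line verification.

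For (d), I unwind $\adpt$ on morphisms: $\adpt{f^!}(\eta_X(x)) = (\varphi^{-1}(\mathcal N_x), p^{-1}(\mathcal U_x))$, and $u \in \varphi^{-1}(\mathcal N_x) \Leftrightarrow x \in \varphi(u) = f^{-1}(\Open_u) \Leftrightarrow u \in x_\Omega$, so the first component equals $x_\Omega$; similarly the second equals $x_L$, whence $\adpt{f^!}(\eta_X(x)) = f(x)$. For (e), the same chain of equivalences run in reverse settles uniqueness: any ad-frame homomorphism $(\varphi', p')$ with $\adpt{(\varphi', p')} \circ \eta_X = f$ must satisfy $x \in \varphi'(u) \Leftrightarrow u \in (\varphi')^{-1}(\mathcal N_x) \Leftrightarrow u \in x_\Omega \Leftrightarrow x \in f^{-1}(\Open_u)$, forcing $\varphi' = \varphi$, and likewise $p' = p$. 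I do not anticipate any serious obstacle; the whole proof is driven by the defining formulas above, and the real content has already been absorbed into Lemmas~\ref{deflem:adO}, \ref{lemma:adpt}, and \ref{lemma:dpt:func}.
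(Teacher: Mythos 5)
Your proposal is correct and follows essentially the same route as the paper: the paper writes $f(x)=(g(x),h(x))$ and sets $f^!=(g^!,h^!)$ with $g^!(v)=g^{-1}(\Open_v)$ and $h^!(a)=h^{-1}(\Al_a)$, which coincides with your $\varphi(u)=f^{-1}(\Open_u)$, $p(a)=f^{-1}(\Al_a)$, and all the verification steps (Lemma~\ref{lemma:adpt} for the homomorphism properties, pointwise transfer of the point conditions for the four preservation clauses, the same chain of equivalences for the triangle identity and uniqueness) match. Your explicit remark that $p(a)$ is $\leq_X$-upwards-closed because $\Al_a$ is upwards-closed and $f$ is order-preserving is a small point the paper leaves implicit, but it changes nothing structurally.
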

\begin{proof}
  For every $x \in X$, $f (x)$ is a point of $\OmegaL$, which we will
  write as $(g (x), h (x))$; $g (x)$ is a completely prime filter of
  elements of $\Omega$, and $h (x)$ is a completely prime complete
  filter of elements of $L$.

  If $f^!$ exists, then writing it as $(\varphi, p)$, we must have
  that for every $x \in X$,
  $f (x) = \adpt {f^!} (\eta_X (x)) = \adpt {f^!} (\mathcal N_x,
  \mathcal U_x)$ (where $\mathcal N_x$ is the completely prime filter
  of open neighborhoods of $x$ and $\mathcal U_x$ is the completely
  prime complete filter of $\leq_X$-upwards-closed subsets of $X$
  containing $x$)
  $= (\varphi^{-1} (\mathcal N_x), p^{-1} (\mathcal U_x))$.  Looking
  at first components, we must have
  $g (x) = \varphi^{-1} (\mathcal N_x)$, so for every $x \in X$, for
  every $v \in \Omega$, $v \in g (x)$ if and only if
  $\varphi (v) \in \mathcal N_x$, if and only if $x \in \varphi (v)$.
  Hence $\varphi (v)$ must be equal to $\{x \in X \mid v \in g (x)\}$,
  for every $v \in \Omega$.  Similarly, for every $a \in L$, $p (a)$
  must be equal to $\{x \in X \mid a \in h (x)\}$.  This shows that
  $f^!$ is unique if it exists.

  Let us define $g^! (v)$ as $\{x \in X \mid v \in g (x)\}$ for every
  $v \in \Omega$, $h^! (a)$ as $\{x \in X \mid a \in h (x)\}$ for
  every $a \in L$, and $f^!$ as $(g^!, h^!)$.

  We check that $g^!$ is a frame homomorphism.  For every
  $v \in \Omega$, for every $x \in X$, $x \in g^! (v)$ if and only if
  $v \in g (x)$, by definition of $g^!$, if and only if
  $g (x) \in \Open_v$; so $g^! (v)$ is equal to $g^{-1} (\Open_v)$ for
  every $v \in \Omega$.  But $g^{-1}$ preserves all unions and
  intersections, while $v \mapsto \Open_v$ is a frame homomorphism by
  Lemma~\ref{lemma:adpt}.  Hence $g^!$ is a frame homomorphism.

  Similarly, $h^! (a) = h^{-1} (\Al_a)$ for every $a \in L$, so $h^!$
  is a complete lattice homomorphism, using the fact that
  $a \mapsto \Al_a$ is a complete lattice homomorphism by
  Lemma~\ref{lemma:adpt}.

  \up{Let us check that $f^!$ preserves totality.  For every
    $(v, b) \in \tot$, we must show that $(g^! (v), h^!  (b))$ is
    total, namely that every point $x \in X$ belongs to $g^! (v)$ or
    to $h^! (b)$.  Since $f (x) = (g (x), h (x))$ is a point of
    $\OmegaL$, and since $(v, b) \in \tot$, we must have $v \in g (x)$
    or $b \in h (x)$, namely $x \in g^! (v)$ or $x \in h^! (b)$.}

  \up{Let us check that $f^!$ preserves consistency.  For every
    $(v, b) \in \con$, we must show that $(g^! (v), h^!  (b))$ is
    consistent, namely that $g^! (v)$ and $h^! (b)$ are disjoint.  A
    point in the intersection would be a point $x \in X$ such that
    $v \in g (x)$ and $a \in h (x)$.  Since $f (x) = (g (x), h (x))$
    is a point of $\OmegaL$, and since $(v, b) \in \con$, we must have
    $v \not\in g (x)$ or $b \not\in h (x)$, leading to a
    contradiction.}

  \down{We check that $f^!$ preserves containment.  For every
    $(v, b) \in \fof$, we must show that $g^! (v) \supseteq h^! (b)$.
    For every point $x \in h^! (b)$, $b$ is in $h (x)$.  But
    $f (x) = (g (x), h (x))$ is a point of $\OmegaL$ and
    $(v, b) \in \fof$, so $v \in g (x)$ or $b \not\in h (x)$.  Since
    $b \in h (x)$, we obtain that $v \in g (x)$, hence that
    $x \in g^! (v)$.}

  \down{We check that $f^!$ preserves inclusion.  For every
    $(v, b) \in \cou$, we must show that $g^! (v) \subseteq h^! (b)$.
    For every $x \in g^! (v)$, $v$ is in $g (x)$.  Since
    $f (x) = (g (x), h (x))$ is a point of $\OmegaL$ and
    $(v, b) \in \cou$, we have $v \not\in g (x)$ or $b \in h (x)$.
    But $v \in g (x)$, so $b \in h (x)$, in other words
    $x \in h^! (b)$.}

  Finally, for every $x \in X$,
  $\adpt {f^!} (\eta_X (x)) = ({(g^!)}^{-1} (\mathcal N_x),
  {(h^!)}^{-1} (\mathcal U_x)) = (\{v \in \Omega \mid g^! (v) \in
  \mathcal N_x\}, \{b \in L \mid h^! (b) \in \mathcal U_x\}) = (\{v
  \in \Omega \mid x \in g^! (v)\}, \{b \in L \mid x \in h^!  (b)\}) =
  (\{v \in \Omega \mid v \in g (x)\}, \{b \in L \mid b \in h (x)\}) =
  (g (x), h (x)) = f (x)$.
\end{proof}

We rephrase Proposition~\ref{prop:adpt:adj} as follows.
\begin{theorem}
  \label{thm:adpt}
  There is an adjunction $\adO \dashv \adpt$ between $\PreTopcat$ and
  $\adFrm^{op}$, with unit $\eta$.
\end{theorem}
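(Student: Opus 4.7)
The plan is to observe that Theorem~\ref{thm:adpt} is essentially a repackaging of Proposition~\ref{prop:adpt:adj} into the language of adjunctions: once we know that $\adO$ and $\adpt$ are functors and that each $\eta_X$ is a universal arrow from $X$ to $\adpt$, the adjunction follows from standard category theory and there is almost nothing new to prove.

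First I would assemble the ingredients already at hand: $\adO \colon \PreTopcat \to \adFrm^{op}$ is a functor by Corollary~\ref{corl:adO:func}, $\adpt \colon \adFrm^{op} \to \PreTopcat$ is a functor by Lemma~\ref{lemma:dpt:func}, and each $\eta_X \colon X \to \adpt{\adO X}$ is a morphism in $\PreTopcat$ by Definition and Lemma~\ref{deflem:eta}. Then I would invoke the standard categorical criterion that an adjunction $F \dashv G$ with unit $\eta$ is equivalent to specifying, for each $X$, a universal arrow $\eta_X \colon X \to GF(X)$: that is, a morphism through which every morphism $f \colon X \to G(C)$ factors uniquely via some $F(X) \to C$. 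Proposition~\ref{prop:adpt:adj} is precisely this statement for $F = \adO$ and $G = \adpt$, bearing in mind that a morphism $\adO X \to \OmegaL$ in $\adFrm^{op}$ is the same data as an ad-frame homomorphism $\OmegaL \to \adO X$; the unique factorizer is the morphism $f^!$ constructed there.

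Finally, I would verify the naturality of $\eta$, which is implicit in calling it ``the unit''. The quickest route is a direct pointwise computation: for a morphism $f \colon X \to Y$ in $\PreTopcat$ and $x \in X$, unwinding $\adO f = (f^{-1}, f^{-1})$ and the definition of $\adpt$ on morphisms reduces the check to the identities $(f^{-1})^{-1}(\mathcal N_x) = \mathcal N_{f(x)}$ and $(f^{-1})^{-1}(\mathcal U_x) = \mathcal U_{f(x)}$, both immediate from the equivalence $x \in f^{-1}(V) \iff f(x) \in V$. Alternatively, naturality follows from the uniqueness clause of Proposition~\ref{prop:adpt:adj}. There is no substantive obstacle; the only subtlety worth flagging is the direction-flip convention, namely that a morphism $\adO X \to \OmegaL$ in $\adFrm^{op}$ is an ad-frame homomorphism $\OmegaL \to \adO X$, which matches the direction in which $f^!$ was constructed.
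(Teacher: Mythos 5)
Your proposal matches the paper exactly: the paper presents Theorem~\ref{thm:adpt} as a direct rephrasing of Proposition~\ref{prop:adpt:adj}, which is precisely the universal-arrow criterion you invoke, with the same direction-flip convention on $\adFrm^{op}$-morphisms. Your added remarks on naturality (which in any case follows automatically from the universal property) are correct but not needed beyond what the paper already establishes.
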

Theorem~\ref{thm:adpt} is analogue to the Stone adjunction $\Open
\dashv \pt$ between $\Topcat$ and $\Frm^{op}$.

\section{Ad-sobrification}
\label{sec:ad-sobrification}

The usual Stone adjunction $\Open \dashv \pt$ gives rise to a monad
$\pt\Open$, the \emph{sobrification} monad.  For a topological space
$X$, there is a simpler, and naturally isomorphic way of describing
$\pt \Open X$, as the following space $X^s$, the \emph{standard
  sobrification} of $X$, defined as follows.  The points of $X^s$ are
the \emph{irreducible} closed subsets $C$ of $X$, namely the non-empty
closed sets $C$ such that for all closed subsets $C_1$ and $C_2$ of
$X$ such that $C \subseteq C_1 \cup C_2$, $C$ is included in $C_1$ or
in $C_2$.  Equivalently, a closed set $C$ is irreducible if and only
if for every finite family of open sets $U_1$, \ldots, $U_n$, if $C$
intersects each one of them then $C$ intersects
$U_1 \cap \cdots \cap U_n$.  The open subsets of $X^s$ are the sets
$\diamond U \eqdef \{C \in X^s \mid C \cap U \neq \emptyset\}$
\citep[Section~8.2]{JGL-topology}.

Analogously, we call
$\adpt {\adO X}$ the \emph{ad-sobrification} of $X$.  Its points are
the pairs $(\mathcal N, \mathcal U)$ where:
\begin{enumerate}
\item $\mathcal N$ is a completely prime filter of open subsets of
  $X$;
\item $\mathcal U$ is a completely prime complete filter of
  $\leq_X$-upwards-closed subsets of $X$;
\item \up{for every total pair $(U, A)$ consisting of an open subset $U$
  and of an $\leq_X$-upwards-closed subset $A$ of $X$, namely if
  $U \cup A = X$, then $U \in \mathcal N$ or $A \in \mathcal U$;}
\item \up{for every consistent pair $(U, A)$ consisting of an open subset
  $U$ and of an $\leq_X$-upwards-closed subset $A$ of $X$, namely if
  $U \cap A = \emptyset$, then $U \not\in \mathcal N$ or $A \not\in
  \mathcal U$;}
\item \down{for every pair $(U, A)$ consisting of an open subset $U$
  and of an $\leq_X$-upwards-closed subset $A$ of $X$ such that $U
  \supseteq A$, $U \in \mathcal N$ or $A \not\in \mathcal U$;}
\item \down{for every pair $(U, A)$ consisting of an open subset $U$
  and of an $\leq_X$-upwards-closed subset $A$ of $X$ such that $U
  \subseteq A$, $U \not\in \mathcal N$ or $A \in \mathcal U$.}
\end{enumerate}
We can simplify this as follows.

Item~1.  Since $\mathcal N$ is a completely prime filter of open
subsets of $X$, the complement of the largest open subset of $X$ that
is not in $\mathcal N$ is an irreducible closed subset $C$ of $X$, and
$\mathcal N$ is the collection of open subsets of $X$ that intersect
$C$.  In other words, we can represent $\mathcal N$ by $C$, which is a
point in the (standard) sobrification $X^s$ of $X$.

Item~2.  $\mathcal U$ simplifies even more.  Since $\mathcal U$ is a
complete filter, the intersection of all the elements of $\mathcal U$
is still in $\mathcal U$.  In other words, there is an
$\leq_X$-upwards-closed subset $A$ of $X$ such that $\mathcal A$ is
the collection of $\leq_X$-upwards-closed subsets of $X$ containing
$A$.  Additionally, since $\mathcal U$ is completely prime, and since
$A$ is the union of all the subsets $\upc_X x$, $x \in A$ (where
$\upc_X$ denotes upward closure with respect to $\leq_X$), $\upc_X x$
must be in $\mathcal N$ for some $x \in A$.  In other words,
$A \subseteq \upc_X x$ for some $x \in A$.  The reverse inclusion is
obvious, so $A$ is the $\leq_X$-upward-closure of some point
$x \in X$.  (This point is not unique in general, unless $\leq_X$ is
antisymmetric.)  Knowing this, $\mathcal U$ is just the collection of
$\leq_X$-upwards-closed subsets of $X$ that contain $x$.  This point
$x$ is uniquely determined up to $\leq_X \cap \geq_X$, the equivalence
relation associated with $\leq_X$ (equality if $\leq_X$ is
antisymmetric).  Let us write $[x]_X$ for the equivalence class of
$x$.  Then $\mathcal U$ is the collection of $\leq_X$-upwards-closed
subsets of $X$ that contain $x$, for some \emph{unique} equivalence
class $[x]_X$.

\up{Item~3.  The condition on totality then reads: for every open
  subset $U$ of $X$, for every $\leq_X$-upwards-closed subset $A$ of
  $X$, if $U \cup A = X$ then $U$ intersects $C$ or $x \in A$.  We
  claim that this condition is equivalent to the fact that
  $C \cap \dc_X x$ cannot be empty, where $\dc_X$ denotes downward
  closure with respect to $\leq_X$.  If the condition holds, then
  taking $U$ to be the complement of $C$ and $A$ to be the complement
  of $\dc_X x$, we obtain that $U \cap A$ cannot be equal to $X$; by
  taking complements, $C \cap \dc_X x$ cannot be empty.  Conversely,
  if $C \cap \dc_X x$ is non-empty, then let $y$ be in the
  intersection.  For every open subset $U$ of $X$, for every
  $\leq_X$-upwards-closed subset $A$ of $X$, if $U$ does not intersect
  $C$ and $x \not\in A$ then in particular $y$ is not in $U$ and $y$
  is not in $A$ (otherwise, since $y \in \dc_X x$, namely since
  $y \leq_X x$, and since $A$ is $\leq_X$-upwards-closed, $x$ would be
  in $A$), so $U \cup A \neq X$.  This is the contrapositive of the
  condition on totality.}

\up{Item~4.  The condition on consistency reads: for every open subset
  $U$ and every $\leq_X$-upwards-closed subset $A$ of $X$ such that
  $U \cap A = \emptyset$, then $U$ is disjoint from $C$ or
  $x \not\in A$.  We claim that this is equivalent to
  $C \subseteq cl (\upc_X x)$, where $cl$ denotes closure in $X$.
  Indeed, if $C \subseteq cl (\upc_X x)$ failed, then by taking the
  complement of $cl (\upc_X x)$ for $U$ and $\upc_X x$ for $A$, we
  would have $U \cap A = \emptyset$ but $x \in A$, so $U$ would be
  disjoint from $C$ by the condition; but that would say that
  $C \subseteq cl (\upc_X x)$, which is impossible.  Conversely, if
  $C \subseteq cl (\upc_X x)$, then for every open subset $U$ and
  every $\leq_X$-upwards-closed subset $A$ of $X$ such that $U$
  intersects $C$ (hence also $\upc_X x$, say at $y$) and $x \in A$,
  then $U$ would intersect $A$ (at $y$).  This is the contrapositive
  of the condition.}

\down{Item~5.  Symmetrically to item~3, the condition on containment
  is equivalent to the fact that $C \cap \upc_X x \neq \emptyset$,
  where $\upc_X$ is upward closure with respect to $\leq_X$.  Indeed,
  the condition on containment is equivalent to: for every open set
  $U$ and for every $\leq_X$-upwards-closed set $A$ such that
  $U \supseteq A$, $U$ intersects $C$ or $x \not\in A$.  If that
  condition holds, then taking the complement of $C$ for $U$ and
  $\upc_X x$ for $A$, we have $U \cap C = \emptyset$ and $x \in A$, so
  by contraposition $U \not\supseteq A$; in other words,
  $C \cap \upc_X x \neq \emptyset$.  Conversely, if
  $C \cap \upc_X x \neq \emptyset$, then for every open set $U$ and
  for every $\leq_X$-upwards-closed set $A$ such that $U \supseteq A$,
  if $x \in A$, then $\upc_X x \subseteq A \subseteq U$, and since $C$
  intersects $\upc_X x$, it must intersect the larger set $U$.}

\down{Item~6.  Symmetrically to item~4, the condition on inclusion is
  equivalent to $C \subseteq cl (\dc_X x)$.  Indeed, the condition on
  inclusion is equivalent to: for every open set $U$ and for every
  $\leq_X$-upwards-closed set $A$ such that $U \subseteq A$, $U$ is
  disjoint from $C$ or $x \in A$.  If that condition holds, then let
  $U$ be the complement of $cl (\dc_X x)$ and $A$ be the complement of
  $\dc_X x$.  We have $\dc_X x \subseteq cl (\dc_X x)$, so
  $U \subseteq A$, and therefore $U$ is disjoint from $C$ or
  $x \in A$.  But $x$ is not in $A$, so $U$ is disjoint from $C$,
  meaning that $C \subseteq cl (\dc_X x)$.  Conversely,
  $C \subseteq cl (\dc_X x)$, then for every open set $U$ and for
  every $\leq_X$-upwards-closed set $A$ such that $U \subseteq A$, if
  $U$ is not disjoint from $C$, we wish to show that $x \in A$.  Since
  $U$ intersects $C$, $U$ intersects $cl (\dc_X x)$, hence also
  $\dc_X x$.  Let $y \in U \cap \dc_X x$.  Since $U \subseteq A$, $y$
  is in $A$; but $y \leq_X x$ and $A$ is $\leq_X$-upwards-closed, so
  $x \in A$.}

This leads us to the following definition. We recall that $[x]_X$
denotes the equivalence class of $x \in X$ with respect to $\leq_X
\cap \geq_X$.
\begin{definition}
  \label{defn:sobrif}
  The \emph{ad-sobrification} $X^{ads}$ of a preordered topological
  space $X$ is the collection of pairs $(C, [x]_X)$ where $C \in X^s$,
  $x \in X$, \up{$C$ intersects $\dc_X x$,
    $C \subseteq cl (\upc_X x)$}, \down{$C$ intersects $\upc_X x$ and
    $C \subseteq cl (\dc_X x)$}.  Its topology consists of the open
  sets
  $\diamond^{ad} U \eqdef \{(C, [x]_X) \in X^{ads} \mid C \cap U \neq
  \emptyset\}$, and its preordering puts $(C, [x]_X)$ below
  $(D, [y]_X)$ if and only if the collection of
  $\leq_X$-upwards-closed subsets of $X$ that contains $x$ is included
  in the collection of $\leq_X$-upwards-closed subsets of $X$ that
  contains $y$, if and only if $x \leq_X y$.
\end{definition}

Conversely to what we did in the discussion preceding this definition,
any point $(C, [x]_X)$ of $X^{ads}$ yields a point
$(\mathcal N, \mathcal U)$ of $\adpt {\adO X}$, by letting
$\mathcal N$ be the collection of open subsets $U$ of $X$ that
intersect $C$ and $\mathcal U$ be the collection of
$\leq_X$-upwards-closed subsets of $X$ that contain $x$, and the
constructions $(C, [x]_X) \mapsto (\mathcal N, \mathcal U)$ and
$(\mathcal N, \mathcal U) \mapsto (C, [x]_X)$ are inverse of each
other.  Through this bijection, the open subsets $\Open_U$ are
transported to $\diamond^{ad} U$, and the preordering on
$\adpt {\adO X}$ is transported to the preordering given in
Definition~\ref{defn:sobrif}.  In other words,
\begin{fact}
  \label{fact:ads}
  For every preordered topological space, $X^{ads}$ is isomorphic to
  $\adpt {\adO X}$ in $\PreTopcat$.
\end{fact}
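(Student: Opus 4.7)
The plan is to exhibit explicit mutually inverse, continuous, order-preserving maps $\Phi \colon X^{ads} \to \adpt{\adO X}$ and $\Psi \colon \adpt{\adO X} \to X^{ads}$, piecing together the correspondences already established in items 1--6 of the discussion preceding the statement.

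First I would define $\Phi (C, [x]_X) \eqdef (\mathcal N, \mathcal U)$ where $\mathcal N$ is the collection of open subsets of $X$ that meet $C$, and $\mathcal U$ is the collection of $\leq_X$-upwards-closed subsets of $X$ that contain $x$. Well-definedness on the equivalence class $[x]_X$ is automatic: any two $\leq_X\cap\geq_X$-equivalent points belong to exactly the same $\leq_X$-upwards-closed sets. That $\mathcal N$ is a completely prime filter on $\Open X$ is a standard fact about irreducible closed sets; that $\mathcal U$ is a completely prime complete filter on $L$ is immediate from the definitions. The four interaction conditions defining points of $\adpt{\adO X}$ (totality, consistency, containment, inclusion) are \emph{exactly} the four conditions rewritten in items~3, 4, 5, 6 above in terms of $C$, $x$, and $\dc_X$, $\upc_X$, $cl$, so they hold by the definition of $X^{ads}$.

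In the converse direction I would define $\Psi (\mathcal N, \mathcal U) \eqdef (C, [x]_X)$ where $C$ is the complement of the union of all open sets not in $\mathcal N$ (yielding an irreducible closed set in $X^s$, as in item~1), and $x$ is any representative such that $\upc_X x = \bigcap \mathcal U$ (its existence is exactly what item~2 establishes, and the equivalence class $[x]_X$ is unique). The equivalences spelled out in items 3--6 then force $(C, [x]_X)$ to satisfy the defining conditions of $X^{ads}$. That $\Psi \circ \Phi = \identity{X^{ads}}$ and $\Phi \circ \Psi = \identity{\adpt{\adO X}}$ reduces to two familiar facts: that an irreducible closed set is recovered as the complement of the largest open set missing from its associated completely prime filter, and that $\upc_X x$ is determined by (and determines) the collection of $\leq_X$-upwards-closed supersets of $\{x\}$.

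It then remains to check that $\Phi$ and $\Psi$ respect the topology and the preorder. For the topology, $\Phi^{-1}(\Open_U) = \{(C, [x]_X) \mid U \in \mathcal N\} = \{(C, [x]_X) \mid U \cap C \neq \emptyset\} = \diamond^{ad} U$, so $\Phi$ is continuous and $\Psi$ sends basic open sets of $\adpt{\adO X}$ to basic open sets of $X^{ads}$; since the topologies are generated by these, both maps are continuous (indeed $\Phi$ is a homeomorphism). For the preorder, $(\mathcal N, \mathcal U) \leq_{\adpt{\adO X}} (\mathcal N', \mathcal U')$ means $\mathcal U \subseteq \mathcal U'$, which by the description of $\mathcal U$ is equivalent to: every $\leq_X$-upwards-closed set containing $x$ also contains $x'$, i.e.\ $x \leq_X x'$, which is exactly the preorder of Definition~\ref{defn:sobrif}.

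The proof contains no single hard step; the only real obstacle is the bookkeeping of assembling the six separate correspondences into a single coherent bijection and verifying naturality of the identifications of topology and preorder. Since items 1--6 of the preceding discussion have done all the substantive computational work, the proof is essentially a verification that the described constructions are mutually inverse and that the structural maps agree.
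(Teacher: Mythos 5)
Your proposal is correct and follows essentially the same route as the paper: the paper treats the Fact as a direct consequence of the six-item analysis preceding Definition~\ref{defn:sobrif}, asserting exactly the two mutually inverse constructions $(C,[x]_X)\mapsto(\mathcal N,\mathcal U)$ and $(\mathcal N,\mathcal U)\mapsto(C,[x]_X)$ that you spell out, together with the transport of the topology ($\Open_U \leftrightarrow \diamond^{ad}U$) and of the preordering. Your write-up merely makes explicit the bookkeeping that the paper leaves implicit, so there is nothing to add.
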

Through this isomorphism, the unit
$\eta_X \colon X \to \adpt {\adO X}$ transports (by composition) to a
map which we will again write as $\eta_X$, from $X$ to $X^{ads}$, and
which sends every point $x \in X$ to $(cl (\{x\}), [x]_X)$.  This is
the unit of the monad $\_^{ads} \cong \adpt \adO$ associated with the
adjunction $\adO \dashv \adpt$.  Its action on morphisms is obtained
by tracking the action of $\adpt \adO$ through the isomorphism.  We
make it explicit now.  Let us write $f [C]$ for the image of $C$ under $f$.
\begin{proposition}
  \label{prop:ads:mor}
  For every continuous order-preserving map $f \colon X \to Y$ between
  preordered topological spaces, the functor $\_^{ads}$ applied to $f$
  yields the continuous order-preserving map
  $f^{ads} \colon X^{ads} \to Y^{ads}$ defined by
  $f^{ads} (C, [x]_X) \eqdef (cl (f [C]), [f (x)]_Y)$, for every
  $(C, [x]_X) \in X^{ads}$.
\end{proposition}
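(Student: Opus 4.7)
The plan is to compute the action of $\adpt(\adO f)$ explicitly and then transport it along the isomorphism $X^{ads} \cong \adpt{\adO X}$ of Fact~\ref{fact:ads}. By Lemma~\ref{lemma:ad:mor}, $\adO f = (f^{-1}, f^{-1})$, and by Lemma~\ref{lemma:dpt:func}, $\adpt(\adO f)$ sends a point $(\mathcal N, \mathcal U)$ of $\adpt{\adO X}$ to the point $(\mathcal N', \mathcal U')$ of $\adpt{\adO Y}$ with $\mathcal N' = \{V \in \Open Y \mid f^{-1}(V) \in \mathcal N\}$ and $\mathcal U' = \{B \in L_Y \mid f^{-1}(B) \in \mathcal U\}$. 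So what remains is to rewrite this in terms of the $(C, [x]_X)$ presentation.

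Under the isomorphism, $(C, [x]_X)$ corresponds to the pair $(\mathcal N, \mathcal U)$ with $\mathcal N = \{U \in \Open X \mid U \cap C \neq \emptyset\}$ and $\mathcal U = \{A \in L_X \mid x \in A\}$. Substituting, $V \in \mathcal N'$ iff $f^{-1}(V) \cap C \neq \emptyset$ iff $V \cap f[C] \neq \emptyset$ iff $V \cap cl(f[C]) \neq \emptyset$ (the last equivalence because $V$ is open). Hence $\mathcal N'$ is the completely prime filter associated with the closed set $cl(f[C])$; I need to check that $cl(f[C])$ is irreducible, which follows easily from irreducibility of $C$: if $cl(f[C]) \subseteq F_1 \cup F_2$ with $F_i$ closed, then $C \subseteq f^{-1}(F_1) \cup f^{-1}(F_2)$, so $C \subseteq f^{-1}(F_i)$ for some $i$, whence $cl(f[C]) \subseteq F_i$. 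Similarly, $B \in \mathcal U'$ iff $x \in f^{-1}(B)$ iff $f(x) \in B$, so $\mathcal U'$ corresponds to the equivalence class $[f(x)]_Y$.

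Next, I have to verify that $(cl(f[C]), [f(x)]_Y)$ actually satisfies the defining constraints of $Y^{ads}$ from Definition~\ref{defn:sobrif}. \up{For the ``up'' conditions, pick $z \in C \cap \dc_X x$; by monotonicity $f(z) \leq_Y f(x)$, so $f(z) \in cl(f[C]) \cap \dc_Y f(x)$. For the closure condition, note that monotonicity gives $f[\upc_X x] \subseteq \upc_Y f(x)$, and continuity gives $f[cl(\upc_X x)] \subseteq cl(f[\upc_X x]) \subseteq cl(\upc_Y f(x))$; since $C \subseteq cl(\upc_X x)$, we get $cl(f[C]) \subseteq cl(\upc_Y f(x))$.} \down{The ``down'' conditions are verified symmetrically, using $f[\dc_X x] \subseteq \dc_Y f(x)$.} Alternatively, these constraints must hold automatically, because the pair $(\mathcal N', \mathcal U')$ is a point of $\adpt{\adO Y}$ by Lemma~\ref{lemma:dpt:func}, and the translation of items 3--6 into the $(C, [x]_Y)$-form is exactly what the definition of $Y^{ads}$ records.

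Finally, continuity and order-preservation of $f^{ads}$ follow from Lemma~\ref{lemma:dpt:func} applied to $\adpt(\adO f)$, but can also be checked directly: for the preordering, $x \leq_X x'$ implies $f(x) \leq_Y f(x')$ by monotonicity of $f$, so $(cl(f[C]), [f(x)]_Y) \leq (cl(f[D]), [f(x')]_Y)$; for continuity, $(f^{ads})^{-1}(\diamond^{ad} V) = \{(C, [x]_X) \mid cl(f[C]) \cap V \neq \emptyset\} = \{(C, [x]_X) \mid C \cap f^{-1}(V) \neq \emptyset\} = \diamond^{ad} f^{-1}(V)$. The only mildly technical point in the whole argument is the irreducibility of $cl(f[C])$ and the closure-of-upward-set computation using joint continuity-plus-monotonicity; everything else is bookkeeping along the bijection.
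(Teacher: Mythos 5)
Your proposal is correct and follows essentially the same route as the paper: compute $\adO f = (f^{-1}, f^{-1})$ via Lemma~\ref{lemma:ad:mor}, apply $\adpt$ as in Lemma~\ref{lemma:dpt:func}, and translate the resulting point $(\varphi^{-1}(\mathcal N), p^{-1}(\mathcal U))$ through the bijection of Fact~\ref{fact:ads} to identify it with $(cl(f[C]), [f(x)]_Y)$. The extra direct verifications you include (irreducibility of $cl(f[C])$, the four constraints of Definition~\ref{defn:sobrif}, continuity and monotonicity of $f^{ads}$) are all correct but, as you yourself note, redundant, since they are guaranteed by Lemma~\ref{lemma:dpt:func} together with the isomorphism of Fact~\ref{fact:ads}; the paper omits them for exactly that reason.
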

\begin{proof}
  For every continuous order-preserving map $f \colon X \to Y$ between
  preordered topological spaces, $\adO f \colon \adO Y \to \adO X$ is
  $(\varphi, p) \eqdef (f^{-1}, f^{-1})$ (see
  Lemma~\ref{lemma:ad:mor}), and then $\adpt {\adO f}$ maps every
  point $(\mathcal N, \mathcal U)$ of $\adO X$ to
  $(\varphi^{-1} (\mathcal N), \allowbreak p^{-1} (\mathcal U))$.
  When $\mathcal N$ is the completely prime filter of open subsets of
  $X$ that intersect $C$, $\varphi^{-1} (\mathcal N)$ is the
  completely prime filter of open subsets $V$ of $Y$ such that
  $\varphi (V) = f^{-1} (V) \in \mathcal N$, namely such that
  $f^{-1} (V)$ intersects $C$, namely such that $V$ intersects
  $f [C]$, or equivalently $cl (f [C])$.  When $\mathcal U = [x]_X$,
  $p^{-1} (\mathcal U)$ is the completely prime complete filter of
  $\leq_Y$-upwards-closed subsets $B$ of $Y$ such that
  $p (B) = f^{-1} (B)$ contains $x$, namely of those that contain
  $f (x)$.
\end{proof}

For every $\leq_X$-upwards-closed subset $A$ of $X$, let
$[A] \eqdef \{(C, [x]_X) \in X^{ads} \mid x \in A\}$.
\begin{lemma}
  \label{lemma:OS}
  For every preordered topological space $X$, the map $\diamond^{ad}$
  is an order-isomorphism of $\Open X$ onto $\Open {X^{ads}}$.  The
  map $A \mapsto [A]$ is an order-isomorphism of the lattice of
  $\leq_X$-upwards-closed subsets of $X$ onto the lattice of
  upwards-closed subsets of $X^{ads}$.
\end{lemma}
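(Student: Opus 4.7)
The plan is to handle the two order-isomorphism claims in parallel, in each case using the unit map $\eta_X \colon X \to X^{ads}$ (which, after the identification of Fact~\ref{fact:ads}, sends $x$ to $(cl(\{x\}), [x]_X)$) as a probe that lets one recover information about subsets of $X$ from information about subsets of $X^{ads}$. First I would confirm that $\eta_X(x)$ really lies in $X^{ads}$ for every $x \in X$: the set $cl(\{x\})$ is irreducible, $x$ itself witnesses $cl(\{x\}) \cap \dc_X x \neq \emptyset$ and $cl(\{x\}) \cap \upc_X x \neq \emptyset$, and $cl(\{x\}) \subseteq cl(\upc_X x) \cap cl(\dc_X x)$ because $x$ lies in both $\upc_X x$ and $\dc_X x$.

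For the first claim, surjectivity of $\diamond^{ad}$ is immediate from the definition of the topology on $X^{ads}$, and monotonicity is a direct set-theoretic observation. The only nontrivial step is order-reflection (which subsumes injectivity): assuming $\diamond^{ad} U \subseteq \diamond^{ad} V$ and picking $x \in U$, the hypothesis applied to $\eta_X(x)$ produces some $y \in cl(\{x\}) \cap V$, and since $y \in cl(\{x\})$, the open neighborhood $V$ of $y$ must contain $x$, so $x \in V$.

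For the second claim, I would first check that $[A]$ is upwards-closed in $X^{ads}$: the preordering on $X^{ads}$ depends only on the second coordinate, and $A$ is $\leq_X$-upwards-closed. Monotonicity is clear, and injectivity/order-reflection goes through exactly as for $\diamond^{ad}$, probing with $\eta_X(x) \in [A]$. The one step that carries real content is surjectivity. Given an upwards-closed $B \subseteq X^{ads}$, I would set $A \eqdef \{x \in X \mid (C, [x]_X) \in B \text{ for some } C\}$ and then verify (i) that $A$ is $\leq_X$-upwards-closed, by lifting any witness $(C, [x]_X) \in B$ for $x \leq_X y$ along the comparison $(C, [x]_X) \leq \eta_X(y)$ and invoking upwards-closure of $B$; and (ii) that $B = [A]$, where the nontrivial inclusion $[A] \subseteq B$ exploits the fact that two pairs $(C, [x]_X)$ and $(C', [x]_X)$ sharing their second coordinate are mutually $\leq$-comparable, so that upwards-closure of $B$ transports membership between them.

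The main obstacle is this last surjectivity step; the remaining work reduces to straightforward set-theoretic manipulations once one observes that the preorder on $X^{ads}$ is blind to the first coordinate. That ``blindness'' is the structural reason why upwards-closed subsets of $X^{ads}$ are determined entirely by the set of equivalence classes $[x]_X$ they contain, and hence are in bijection with $\leq_X$-upwards-closed subsets of $X$.
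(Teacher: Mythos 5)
Your proposal is correct and follows essentially the same route as the paper: surjectivity of $\diamond^{ad}$ and of $A \mapsto [A]$ from the definitions, order-reflection by probing with $\eta_X(x) = (cl(\{x\}), [x]_X)$, and surjectivity onto upwards-closed subsets of $X^{ads}$ via the same candidate $A = \{x \mid (C,[x]_X) \in B \text{ for some } C\}$, exploiting that the preorder on $X^{ads}$ depends only on the second coordinate. The only cosmetic difference is that you re-verify $\eta_X(x) \in X^{ads}$ explicitly, which the paper takes as already established.
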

\begin{proof}
  $\diamond^{ad}$ is surjective by definition of the topology on
  $X^{ads}$.  It is monotonic: if $U \subseteq V$ in $\Open X$, then
  for every $(C, [x]_X) \in \diamond^{ad} U$, $C$ intersects $U$,
  so $C$ intersects $V$, hence $(C, [x]_X) \in \diamond^{ad} V$.
  We claim that $\diamond^{ad}$ is order-reflecting, namely that
  $\diamond^{ad} U \subseteq \diamond^{ad} V$ implies $U \subseteq V$,
  for all $U, V \in \Open X$.  For every $x \in U$, $cl (\{x\})$
  intersects $U$ (at $x$), so $\eta_X (x) = (cl (\{x\}), [x]_X)$ is
  in $\diamond^{ad} U$.  If
  $\diamond^{ad} U \subseteq \diamond^{ad} V$, then $\eta_X (x)$ must
  also be in $\diamond^{ad} V$, namely $cl (\{x\}$ must intersect $V$.
  Then $\{x\}$ must intersect $V$, so $x \in V$.  As $x$ is arbitrary
  in $U$, $U \subseteq V$.

  Let us write $f$ for the map $A \mapsto [A]$, where $A$ ranges over
  the $\leq_X$-upwards-closed subsets of $X$.  We note that $f (A)$ is
  upwards-closed: if $(C, [x]_X) \in f (A)$, namely if $x \in A$, and
  if $(C, [x]_X)$ is below $(D, [y]_X)$, namely if $x \leq_X y$, then
  $y$ is in $A$, so $(D, [y]_X) \in f (A)$.  It is clear that $f$ is
  monotonic.  Let $A$, $B$ be two $\leq_X$-upwards-closed subsets of
  $X$, and let us assume that $f (A) \subseteq f (B)$.  For every
  $x \in A$, $\eta_X (x)$ is in $f (A)$, hence in $f (B)$, and this
  means that $x \in B$.  Therefore $A \subseteq B$.  We have just
  shown that $f$ is a monotonic order-reflection.
  
  It remains to see that $f$ is surjective.  Let $\Al$ be an
  upwards-closed subset of $X^{ads}$.  We define $A$ as $\{x \in X
  \mid (C, [x]_X) \in \Al \text{ for some }C \in X^s\}$.  $A$ is
  $\leq_X$-upwards-closed:
  if $x \in A$, namely if $(C, [x]_X) \in \Al$ for some $C \in X^s$,
  and if $x \leq_X y$, then $\eta_X (y)$ lies above $(C, [x]_X)$ in
  $X^{ads}$; since $\Al$ is upwards-closed, $\eta_X (y) \in \Al$, so
  $y \in A$.  Finally, $f (A)$ is the collection of points $(D, [x]_X)
  \in X^{ads}$ such that $(C, [x]_X) \in \Al$ for some $C \in X^s$,
  hence it contains $\Al$.  Conversely, every such point $(D, [x]_X)$
  is above (and below) $(C, [x]_X)$ in $X^{ads}$, hence is in $\Al$,
  since $\Al$ is upwards-closed; this shows that $f (A) \subseteq
  \Al$, and therefore that $f (A) = \Al$.
\end{proof}

\begin{lemma}
  \label{lemma:eta}
  For every preordered topological space $X$, for every
  $U \in \Open X$, $\eta_X^{-1} (\diamond^{ad} U) = U$; for every
  $\leq_X$-upwards-closed subset $A$ of $X$, $\eta_X^{-1} ([A]) = A$.
\end{lemma}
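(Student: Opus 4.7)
The plan is to simply unfold the definitions on both sides; neither equality requires any genuine work beyond a standard observation about closures of singletons.

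For the first equality, I would begin by noting that $\eta_X(x) = (cl(\{x\}), [x]_X)$ under the isomorphism of Fact~\ref{fact:ads}. By the definition of $\diamond^{ad} U$, we have $\eta_X(x) \in \diamond^{ad} U$ if and only if $cl(\{x\}) \cap U \neq \emptyset$. The standard observation I would invoke is that for any open set $U$, $U$ intersects $cl(\{x\})$ if and only if $U$ intersects $\{x\}$ itself: one direction is trivial, and for the other, if $y \in cl(\{x\}) \cap U$, then $U$ is an open neighborhood of $y$, hence must contain $x$ since $y \in cl(\{x\})$. Therefore $\eta_X(x) \in \diamond^{ad} U$ iff $x \in U$, giving $\eta_X^{-1}(\diamond^{ad} U) = U$.

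For the second equality, I would just apply the definition of $[A]$ directly: $\eta_X(x) = (cl(\{x\}), [x]_X)$ lies in $[A] = \{(C, [y]_X) \in X^{ads} \mid y \in A\}$ precisely when $x \in A$. The only subtlety worth a brief remark is that this membership criterion is well-defined on equivalence classes, which uses that $A$ is $\leq_X$-upwards-closed (hence saturated with respect to $\leq_X \cap \geq_X$): if $[x]_X = [y]_X$ then $x \leq_X y$ and $y \leq_X x$, and upwards-closure gives $x \in A \Leftrightarrow y \in A$. This immediately yields $\eta_X^{-1}([A]) = A$.

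There is no real obstacle: the proof is a one-line unfolding in each case, with only the closure-meets-open observation deserving explicit mention. I do not expect to need any prior lemma beyond the description of $\eta_X$ recorded in the paragraph following Fact~\ref{fact:ads} and the definitions of $\diamond^{ad}$ and $[A]$.
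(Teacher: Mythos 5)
Your proof is correct and follows the same route as the paper's: unfold $\eta_X(x)=(cl(\{x\}),[x]_X)$, use the fact that an open set meets $cl(\{x\})$ iff it contains $x$ for the first equality, and use that an $\leq_X$-upwards-closed set is saturated under $\leq_X\cap\geq_X$ for the second. The only difference is that you spell out the closure-meets-open observation and the well-definedness on equivalence classes slightly more explicitly than the paper does, which is fine.
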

\begin{proof}
  $\eta_X^{-1} (\diamond^{ad} U)$ consists of the points $x$ such that
  $(cl (\{x\}), [x]_X) \in \diamond^{ad} U$, namely such that
  $cl (\{x\})$ intersects $U$, or equivalently such that $\{x\}$
  intersects $U$; that is just $U$.  $\eta_X^{-1} ([A])$ consists of
  the points $x$ such that $(cl (\{x\}), [x]_X)$ is such that $[x]_X$
  can be written as $[y]_A$ for some $y \in A$.  Since $A$ is
  $\leq_X$-upwards-closed, hence a union of equivalence classes, this
  is equivalent to $x \in A$.
\end{proof}

In the setting of unordered topological spaces, the unit of the
$\Open \dashv \pt$ adjunction is bijective at a space if and only if
it is a homeomorphism, if and only if $X$ is \emph{sober}, meaning
that every irreducible closed subset is the closure of a unique point.
\begin{deflem}
  \label{deflem:adsober}
  For a preordered topological space $X$, the following are
  equivalent:
  \begin{enumerate}
  \item $\eta_X \colon X \to X^{ads}$ is bijective;
  \item $\eta_X \colon X \to X^{ads}$ is an isomorphism in
    $\PreTopcat$;
  \item $X$ is \emph{ad-sober}, namely every irreducible pair is equal
    to $(cl (\{x\}), [x]_X)$ for a unique point $x$ of $X$.  An
    \emph{irreducible pair} is a pair $(C, [x]_X)$ where $C \in X^s$
    and $x \in X$, such that \up{$C$ intersects $\dc_X x$,
      $C \subseteq cl (\upc_X x)$}, \down{$C$ intersects $\upc_X x$
      and $C \subseteq cl (\dc_X x)$}.
  \end{enumerate}
\end{deflem}
\begin{proof}
  Condition~3 is simply a rephrasing of condition~1, and condition~2
  clearly implies condition~1.  Let us show that 1 implies 2.  We
  assume that $\eta_X$ is bijective, and we let $g$ be its inverse.
  We need to show that $g$ is continuous and monotonic.  By
  Lemma~\ref{lemma:eta}, $\eta_X^{-1} (\diamond^{ad} U) = U$ for every
  $U \in \Open X$; therefore $g^{-1} (U) = \diamond^{ad} U$, showing
  that $g$ is continuous.  Similarly, $g^{-1} (A) = [A]$ for every
  $\leq_X$-upwards-closed subset $A$ of $X$.  Since $[A]$ is
  upwards-closed in $X^{ads}$ (Lemma~\ref{lemma:OS}), $g^{-1} (A)$ is
  upwards-closed for every $\leq_X$-upwards-closed subset $A$ of $X$;
  it is then an easy exercise to deduce that $g$ is monotonic.
\end{proof}

\section{Sobrification and ad-sobrification}
\label{sec:sobr-ad-sobr}

What is the connection between sobrification and ad-sobrification?
There is definitely a similarity, and we make it explicit here.

Let $\Ind$ be the functor that maps every topological space $X$ to the
preordered topological space $X$, with the indiscrete preordering on
$X$ (where every point is below every other).  This acts as identity
on morphisms.
\begin{proposition}
  \label{prop:Ind}
  There is a natural isomorphism between $\_^{ads} \circ \Ind$ and
  $\Ind \circ \_^s$.
\end{proposition}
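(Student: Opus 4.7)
The plan is to unfold both sides at a space $X$, observe that passing to the indiscrete preorder collapses all the $L$-side data, and read off that what remains is exactly the standard sobrification $X^s$.

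First I would analyze $\Ind X$. Since the preordering on $\Ind X$ makes every point related to every other, the only $\leq$-upwards-closed subsets are $\emptyset$ and $X$, every equivalence class $[x]_{\Ind X}$ is all of $X$, and $\dc_{\Ind X} x = \upc_{\Ind X} x = X$ for every $x$. Plugging these into Definition~\ref{defn:sobrif}, the four conditions on an irreducible pair $(C, [x]_{\Ind X})$ (both the red and the blue ones) reduce to requiring $C$ to meet $X$ and $C \subseteq cl(X)$, which are automatic since $C$ is nonempty. The single equivalence class $[x]_{\Ind X}$ carries no information, so the underlying set of $(\Ind X)^{ads}$ is in natural bijection with $X^s$ via $(C, [x]_{\Ind X}) \longleftrightarrow C$.

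Next I would check that this bijection is a homeomorphism and respects preorderings. Under the bijection the open set $\diamond^{ad} U = \{(C, [x]_{\Ind X}) \mid C \cap U \neq \emptyset\}$ of $(\Ind X)^{ads}$ corresponds exactly to the open set $\diamond U$ of $X^s$, and by Lemma~\ref{lemma:OS} these exhaust both topologies; so the bijection is a homeomorphism. On the preorder side, Definition~\ref{defn:sobrif} orders two points of $(\Ind X)^{ads}$ by $x \leq_{\Ind X} y$, which always holds; so the preordering on $(\Ind X)^{ads}$ is the indiscrete one, which is exactly the preordering on $\Ind(X^s)$. Call the resulting isomorphism $\theta_X \colon (\Ind X)^{ads} \to \Ind(X^s)$.

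Finally I would verify naturality. For a continuous map $f \colon X \to Y$, Proposition~\ref{prop:ads:mor} gives $(\Ind f)^{ads}(C, [x]_{\Ind X}) = (cl(f[C]), [f(x)]_{\Ind Y})$, while $\Ind(f^s)$ sends $C \in X^s$ to $cl(f[C]) \in Y^s$; through $\theta_X$ and $\theta_Y$ these clearly agree, so the square $\theta_Y \circ (\Ind f)^{ads} = \Ind(f^s) \circ \theta_X$ commutes. The main (and only) obstacle is keeping the three layers of notation straight: the red/blue conditions of Definition~\ref{defn:sobrif}, the quotient by $\leq \cap \geq$, and the distinct topologies on $X^s$ and on $(\Ind X)^{ads}$; once each is written out the identifications are immediate.
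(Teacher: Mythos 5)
Your proposal is correct and follows essentially the same route as the paper's proof: identify $(C,[x]_{\Ind X})\mapsto C$ as a bijection after noting that the indiscrete preordering trivializes all four irreducibility conditions and the equivalence classes, match $\diamond^{ad}U$ with $\diamond U$, observe the preordering is indiscrete, and deduce naturality from Proposition~\ref{prop:ads:mor}. The only cosmetic difference is that the paper treats the empty space separately before picking a point $x$, whereas your phrasing handles it implicitly.
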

\begin{proof}
  We investigate objects first.  Let $X$ be a topological space.  If
  $X$ is empty, then $(\Ind X)^{ads}$ is empty, and $X^s$ is empty,
  too.  Hence let us assume that $X$ is non-empty, and let us pick
  $x \in X$.  The equivalence class of $x$ with respect to $\sim$ is
  $X$ itself.  Therefore the points of $(\Ind X)^{ads}$ are the pairs
  $(C, X)$ where $C \in X^s$, \up{$C$ intersects $\dc_X x = X$
    (vacuously true), $C \subseteq cl (\upc_X x) = X$ (also vacuously
    true)}, \down{$C$ intersects $\upc_X x = X$ (vacuously true), and
    $C \subseteq cl (\dc_X x) = X$ (still vacuously true).}  This shows
  that the map $(C, X) \mapsto C$ is a bijection from $(\Ind X)^{ads}$
  onto $X^s$.  The inverse image of the open set $\diamond U$ is
  $\diamond^{ad} U$, so this is a homeomorphism.  Also, $(C, X)$ is
  below $(D, X)$ in $(\Ind X)^{ads}$ is always true, so the
  preordering on $(\Ind X)^{ads}$ is the indiscrete preordering.

  We have obtained an isomorphism $(C, X) \mapsto C$ from
  $(\Ind X)^{ads}$ onto $\Ind {(X^s)}$ for every topological space
  $X$.  (This works when $X$ is empty, too.)

  The naturality property means that for every continuous map
  $f \colon X \to Y$, seen as a continuous order-preserving map from
  $\Ind X$ to $\Ind Y$, for every $(C, [x]_X) \in X^{ads}$, the first
  component of $f^{ads} (C, [x]_X)$ is equal to $f^s (C)$.  But
  $f^s (C) = cl (f [C])$, so the claim follows from
  Proposition~\ref{prop:ads:mor}.
\end{proof}

On the other extreme of the spectrum, let $\Discr$ be the functor that
maps every topological space $X$ to the preordered topological space
$X$ with the discrete preordering, namely equality.  This also acts as
identity on morphisms.  Then ad-sobrification does nothing.
\begin{proposition}
  \label{prop:discr}
  There is a natural isomorphism between $\_^{ads} \circ \Discr$ and
  the identity functor.
\end{proposition}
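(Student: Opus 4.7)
The plan is to show that for a discretely preordered space $\Discr X$, the unit $\eta_{\Discr X} \colon \Discr X \to (\Discr X)^{ads}$ of the adjunction is itself the desired natural isomorphism, by verifying that $\Discr X$ is ad-sober in the sense of Definition-Lemma~\ref{deflem:adsober}.

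First I would unwind the irreducible pair condition. Because $\leq_X$ on $\Discr X$ is equality, the equivalence class $[x]_X$ is the singleton $\{x\}$, and both $\upc_X x$ and $\dc_X x$ coincide with $\{x\}$, so also $cl(\upc_X x) = cl(\dc_X x) = cl(\{x\})$. Reading off Definition~\ref{defn:sobrif}, an irreducible pair of $\Discr X$ is a pair $(C, \{x\})$ with $C \in X^s$, $C \cap \{x\} \neq \emptyset$ (equivalently $x \in C$), and $C \subseteq cl(\{x\})$. Since $x \in C$ and $C$ is closed, $cl(\{x\}) \subseteq C$, and combined with the reverse inclusion we get $C = cl(\{x\})$. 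Thus every point of $(\Discr X)^{ads}$ has the form $(cl(\{x\}), \{x\})$, which is exactly $\eta_{\Discr X}(x)$.

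Next I would check that $\eta_{\Discr X}$ is a bijection: surjectivity is what we just showed, and injectivity follows because the second component $\{x\}$ recovers $x$. By Definition-Lemma~\ref{deflem:adsober}, $\eta_{\Discr X}$ is then an isomorphism in $\PreTopcat$; one can double-check directly that the preordering transported to $(\Discr X)^{ads}$ is equality, since $(cl(\{x\}),\{x\}) \leq (cl(\{y\}),\{y\})$ iff $x \leq_X y$ iff $x = y$, matching $\Discr X$.

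Finally, naturality of the isomorphism in $X$ is immediate: since $\eta$ is the unit of the adjunction $\adO \dashv \adpt$ (Theorem~\ref{thm:adpt}) transported along the isomorphism of Fact~\ref{fact:ads}, and since $\Discr$ acts as the identity on morphisms, the naturality square for any continuous $f \colon X \to Y$ is just the naturality square for $\eta$ at $\Discr f$. No step is genuinely delicate here; the one place to be careful is verifying that the two colour-dependent clauses (red for $\upc, \dc$ and blue for their opposites) both collapse to the single equation $C = cl(\{x\})$, which they do precisely because $\leq_X$ and $\geq_X$ coincide for discrete preorderings.
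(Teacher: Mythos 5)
Your proof is correct and follows essentially the same route as the paper: both arguments reduce to showing that the irreducible pairs of $\Discr X$ are exactly the pairs $(cl(\{x\}), [x]_X) = \eta_{\Discr X}(x)$, which forces the ad-sobrification to collapse back to $X$. Your packaging via the unit $\eta$ and Definition and Lemma~\ref{deflem:adsober} is a slightly tidier way to get the isomorphism and makes naturality come for free, whereas the paper writes down the inverse map $(cl(\{x\}), x) \mapsto x$ directly, but the underlying computation is identical.
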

\begin{proof}
  For every topological space $X$, the points of $(\Discr X)^{ads}$
  are the pairs $(C, x)$ such that \up{$C$ intersects
    $\dc_X x = \{x\}$ (namely, such that $x \in C$),
    $C \subseteq cl (\upc_X x) = cl (\{x\})$}, \down{$C$ intersects
    $\upc_X x = \{x\}$ (equivalent to $x \in C$ again) and
    $C \subseteq cl (\dc_X x) = cl (\{x\})$}.  Those conditions are
  equivalent to requiring that $C = cl (\{x\})$.  Then the map
  $(cl (\{x\}), x) \mapsto x$ is an isomorphism from
  $(\Discr X)^{ads}$ onto $X$, and it is easy to see that it is
  natural in $X$.
\end{proof}

A final construction that is called for consists in seeing every
topological space $X$ as a preordered topological space by giving it
its specialization preordering $\leq$ as preordering.  (The
\emph{specialization preordering} $\leq$ is defined by $x \leq y$ if
and only if every open set that contains $x$ also contains $y$, if and
only if $x$ is in the closure of $\{y\}$.)  Let us write $\dc$ for
downward closure and $\upc$ for upward closure with respect to $\leq$.
Let us also write $\equiv$ for $\leq \cap \geq$.  Then $X^{ads}$ is a
bit weird: it consists of pairs $(C, [x]_X)$ where \up{$C$ intersects
  $\dc x$, $C \subseteq cl (\upc x)$}, \down{$C$ intersects $\upc x$
  and $C \subseteq cl (\dc x) = \dc x$}.  \up{If we only consider the
  red option, then here is an extreme case.  Let us imagine that $X$
  has a least element $\bot$ and a largest element $\top$ with respect
  to $\leq$.  Then $\bot$ is in $C$ and in $\dc x$, and $\top$ is in
  $\upc x$, so $cl (\upc x) = X$ contains $C$, and therefore $C$
  trivially intersects $\dc x$ and the inclusion
  $C \subseteq cl (\upc x)$ is trivial, too.  Therefore, in that case,
  $X^{ads}$ is the product of $X^s$ with $X/\equiv$, with the
  (non-$T_0$) topology whose open subsets are the sets
  $\diamond U \times X/\equiv$, $U \in \Open X$, and whose
  (non-antisymmetric) preordering is given by
  $(C, [x]_X) \leq (D, [y]_X)$ if and only if $x \leq y$}.  \down{If
  we consider the blue-only option, then $C$ intersects $\upc x$ if
  and only if $x \in C$, and the conjunction of this with
  $C \subseteq cl (\dc x) = \dc x$ means that $C = \dc x$.  Then the
  only points of $X^{ads}$ are those of the form $(\dc x, [x]_X)$.
  Similarly to Proposition~\ref{prop:discr}, we can then show that
  $X^{ads}$ is isomorphic to the $T_0$-reflection of $X$, namely to
  $X$ quotiented by $\equiv$, with its specialization ordering, which
  is the quotient of $\leq$ by $\equiv$.}  \both{Finally, in the both
  blue and red option, the only points of $X^{ads}$ are those of the
  form $(C, [x]_X)$ with $C = \dc x$, as in the red-only case, and
  then $C$ trivially intersects $\upc x$ and
  $C = \dc x \subseteq \dc \upc x \subseteq cl (\upc x)$.  Hence
  $X^{ads}$ is also the $T_0$-reflection of $X$ with its
  specialization ordering, as in the red-only case.}

Let us write $|X|$ for the underlying topological space of a
preordered topological space $X$, and let us investigate whether
$|X^{ads}| \cong |X|^s$, where $\cong$ is for homeomorphism.  $|\_|$
extends to a functor $|\_| \colon \PreTopcat \to \Topcat$, which maps
every continuous order-preserving map $f$ to $f$ seen as a mere
continuous map.

\begin{remark}
  \label{rem:|ads|}
  Let $X$ be a preordered topological space.  It is in general wrong
  that $|X^{ads}| \cong |X|^s$.  Indeed, let $X$ be obtained as a
  topological space $|X|$ with the discrete preordering.  By
  Proposition~\ref{prop:discr}, $X^{ads}$ is isomorphic to $X$, hence
  $|X^{ads}| \cong |X|^s$ if and only if $|X| \cong |X|^s$, if and
  only if $|X|$ is sober.  Hence taking any non-sober space for $|X|$
  gives us a counterexample.
\end{remark}

\begin{proposition}
  \label{prop:|ads|}
  For every \up{upper semi-closed} and \down{lower semi-closed}
  pretopological space $X$ such that every irreducible closed subset
  $C$ contains a point $x$ such that \up{$C \subseteq \upc_X x$} and
  \down{$C \subseteq \dc_X x$}, the map $(C, [x]_X) \mapsto C$ is a
  homeomorphism from $|X^{ads}|$ onto $|X|^s$, which is natural in
  $X$.
  
  For every $T_1$ sober pretopological space $X$, the map
  $(C, [x]_X) \mapsto C$ is a homeomorphism from $|X^{ads}|$ onto
  $|X|^s \cong |X|$, which is natural in $X$.
\end{proposition}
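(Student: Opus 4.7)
The map $\pi \colon (C, [x]_X) \mapsto C$ is continuous because $\pi^{-1}(\diamond U) = \diamond^{ad} U$ for every $U \in \Open X$, directly from the definitions of the two topologies. Naturality follows from Proposition~\ref{prop:ads:mor}: both $f^s \circ \pi_X$ and $\pi_Y \circ f^{ads}$ send $(C, [x]_X)$ to $cl(f[C])$. Once $\pi$ is a continuous bijection, openness (and hence the homeomorphism property) follows from $\pi(\diamond^{ad} U) = \diamond U$ together with Lemma~\ref{lemma:OS}, which ensures that every open set of $|X^{ads}|$ has the form $\diamond^{ad} U$. So the heart of the plan is to establish bijectivity of $\pi$ in each part.

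For the first part, surjectivity uses the hypothesis directly: given $C \in |X|^s$, pick $x \in C$ with $C \subseteq \upc_X x$ and $C \subseteq \dc_X x$. Then $x$ lies in both $C \cap \dc_X x$ and $C \cap \upc_X x$, and $C \subseteq \upc_X x \subseteq cl(\upc_X x)$ together with its dual give $(C, [x]_X) \in X^{ads}$. Injectivity is the subtler step. Given $(C, [x]_X), (C, [y]_X) \in X^{ads}$, fix a witness $x_0 \in C$ from the hypothesis so that $C \subseteq \upc_X x_0 \cap \dc_X x_0 = [x_0]_X$, where the right-hand side is the equivalence class for $\leq_X \cap \geq_X$. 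Upper semi-closedness upgrades $C \subseteq cl(\upc_X y)$ to $C \subseteq \upc_X y$, so $x_0 \in C$ forces $x_0 \geq y$; the condition $C \cap \dc_X y \neq \emptyset$ yields some $z \in C$ with $z \leq y$, and $z \in C \subseteq \upc_X x_0$ gives $z \geq x_0$, so $x_0 \leq z \leq y$. Together with $x_0 \geq y$ this gives $x_0 \equiv y$, hence $[y]_X = [x_0]_X$; by symmetry $[x]_X = [x_0]_X$, so $[x]_X = [y]_X$.

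For the second part, $T_1$-ness combined with sobriety forces every irreducible closed subset to be a singleton $\{c\}$, so $\{c\} \mapsto c$ is a homeomorphism $|X|^s \to |X|$. Each pair $(\{c\}, [c]_X)$ lies in $X^{ads}$ trivially, so $\pi$ is surjective. Uniqueness is cleanest in the purple reading: the red condition $\{c\} \cap \dc_X x \neq \emptyset$ gives $c \leq x$ and the blue condition $\{c\} \cap \upc_X x \neq \emptyset$ gives $c \geq x$, so $x \equiv c$. The main obstacle throughout is the injectivity step of the first part: $x$, $y$, and the witness $x_0$ need not coincide, only lie in the same equivalence class, and the trick is to use upper semi-closedness to convert $cl(\upc_X y)$ into $\upc_X y$ so that the membership $x_0 \in C$ pins $y$ from above, while the condition $C \cap \dc_X y \neq \emptyset$ pins it from below.
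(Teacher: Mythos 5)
Your proposal is correct and follows the same overall route as the paper's proof: continuity via $\pi^{-1} (\diamond U) = \diamond^{ad} U$, reduction of the homeomorphism property to bijectivity, surjectivity straight from the hypothesis, and singletons in the $T_1$ sober case. The one place where you genuinely diverge is the injectivity step of the first part, and there your argument is actually more complete than the paper's. The paper only checks that two \emph{witnesses} $x, y \in C$ with $C \subseteq \upc_X x, \upc_X y$ are equivalent; but an arbitrary point of $X^{ads}$ over $C$ is a pair $(C, [y]_X)$ where $y$ need not lie in $C$ and need only satisfy $C \cap \dc_X y \neq \emptyset$ and $C \subseteq cl (\upc_X y)$. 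Your move---using upper semi-closedness to upgrade $cl (\upc_X y)$ to $\upc_X y$ so that the witness $x_0 \in C$ gives $y \leq_X x_0$, and then using $C \cap \dc_X y \neq \emptyset$ together with $C \subseteq \upc_X x_0$ to get $x_0 \leq_X y$---is exactly what is needed to conclude $[y]_X = [x_0]_X$ for \emph{every} point over $C$, not just for witnesses. One caveat: in the second part you establish uniqueness of the second component only in the purple reading (using both $\{c\} \cap \dc_X x \neq \emptyset$ and $\{c\} \cap \upc_X x \neq \emptyset$). In the red-only reading one only has $c \leq_X x$ and $c \in cl (\upc_X x)$, which does not pin down $[x]_X$ without a semi-closedness assumption; the paper's own proof is equally silent on this point, so this is a limitation you inherit from the statement rather than a defect of your argument, but it is worth flagging.
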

\begin{proof}
  Let $\pi$ be the map $(C, [x]_X) \mapsto C$.  For every open subset
  $U$ of $X$, $\pi^{-1} (\diamond U) = \diamond^{ad} U$, so $\pi$ is
  continuous.  This also shows that $\pi$ is a homeomorphism if $\pi$ is
  bijective.

  If $X$ is \up{upper semi-closed}, \down{lower semi-closed}, and
  every irreducible closed subset $C$ contains a point $x$ such that
  \up{$C \subseteq \upc_X x$} and \down{$C \subseteq \dc_X x$}, then
  for every irreducible closed subset $C$ of $X$, we pick such a point
  $x \in C$.  Then \up{$C$ intersects $\dc_X x$,
    $C \subseteq cl (\upc_X x)$}, \down{$C$ intersects $\upc_X x$ and
    $C \subseteq cl (\dc_X x)$}, so $(C, [x]_X)$ is an irreducible
  pair, which is mapped to $C$ by $\pi$.  Hence $\pi$ is surjective.
  If there are two points $x, y \in C$ such that
  \up{$C \subseteq \upc_X x, \upc_X y$} (or
  \down{$C \subseteq \dc_X x, \dc_X y$}), then \up{$x \in \upc_X y$
    and $y \in \upc_X x$, so $y \leq x \leq x$, hence
    $[x]_X = [y]_X$}, \down{and similarly with $\dc_X$ instead of
    $\upc_X$}, showing that $\pi$ is injective.

  If $X$ is $T_1$ and sober then every irreducible closed subset $C$
  of $X$ is of the form $\{x\}$ for a unique point $x$, and then it is
  trivially true that \up{$C$ intersects $\dc_X x$,
    $C \subseteq cl (\upc_X x)$}, \down{$C$ intersects $\upc_X x$, and
    $C \subseteq cl (\dc_X x)$}.  Hence the points of $|X^{ads}|$, as
  well as those of $|X|^s$, are exactly the one-element sets, showing
  that $\pi$ is bijective and that $|X|^s \cong |X|$ at the same time.

  For naturality, let $f \colon X \to Y$ be a monotonic continuous
  map.  Then $f^{ads}$ maps $(C, [x]_X)$ to $(cl (f [C]), [f (x)]_Y)$
  by Proposition~\ref{prop:ads:mor}, so
  $\pi (f^{ads} (C, [x]_X)) = cl (f [C]) = f^s (C) = f^s (\pi (C,
  [x]_X))$.
\end{proof}

\section{The $\adO \dashv \adpt$ adjunction is idempotent}
\label{sec:ad-sobr-adjunct}

The usual adjunction $\Open \dashv \pt$ is idempotent, and we prove a
similar statement for $\adO \dashv \adpt$ here.  This means showing
that $\eta_{\adpt \OmegaL}$ is an isomorphism for every ad-frame
$\OmegaL$, or equivalently that $\adpt \OmegaL$ is ad-sober for every
ad-frame $\OmegaL$, by Definition and Lemma~\ref{deflem:adsober}.
The monad associated with an idempotent adjunction is always an
idempotent monad, where an idempotent monad is one whose
multiplication is a natural isomorphism (there are several equivalent definitions).

\begin{definition}
  \label{defn:T0}
  A preordered topological space $X$ is \emph{ad-$T_0$} if and only if
  the intersection of the specialization ordering $\leq$ of (the
  topological space underlying) $X$ and of the preordering $\leq_X$ is
  antisymmetric.
\end{definition}
Equivalently, $X$ is ad-$T_0$ if and only if for all points $x$ and
$y$ in the same equivalence class with respect to $\equiv_X$ (namely,
$\leq_X \cap \geq_X$) such that $x \neq y$, then there is an open
subset $U$ of $X$ that contains $x$ and not $y$ or that contains $y$
and not $x$.

\begin{lemma}
  \label{lemma:adsober:aux}
  A preordered topological space $X$ is ad-sober if and only if it is
  ad-$T_0$ and its only irreducible pairs are those of the form
  $(cl (\{x\}), [x]_X)$, $x \in X$.
\end{lemma}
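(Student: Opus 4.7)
The strategy is to split the biconditional along the two clauses in Definition and Lemma~\ref{deflem:adsober}(3): ad-sobriety requires every irreducible pair to be of the form $(cl(\{x\}), [x]_X)$, and moreover that the witness $x$ be unique. Condition (ii) of the lemma is exactly the existence half, so I would argue that the ad-$T_0$ condition captures precisely the uniqueness half.

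For the forward direction, I would assume $X$ is ad-sober. Condition (ii) is then immediate, so it remains to derive ad-$T_0$. I would argue by contraposition: if ad-$T_0$ fails, pick distinct $x \neq y$ with $x \leq_X y$, $y \leq_X x$, and $x$, $y$ specialization-equivalent in the underlying topological space. The first two relations give $[x]_X = [y]_X$, and the specialization equivalence gives $cl(\{x\}) = cl(\{y\})$ (recall that $x \leq y$ in the specialization order means $x \in cl(\{y\})$, so equivalence implies mutual inclusion of closures). Then the single irreducible pair $(cl(\{x\}), [x]_X)$ is witnessed by both $x$ and $y$, contradicting ad-sobriety.

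For the converse, I would assume (i) and (ii). Given any irreducible pair, condition (ii) supplies a witness. For uniqueness, if $(cl(\{x\}), [x]_X) = (cl(\{y\}), [y]_X)$, the equality of the first components yields $x \leq y$ and $y \leq x$ in the specialization order (each point lying in the closure of the other), while the equality of the second components yields $x \leq_X y$ and $y \leq_X x$. Antisymmetry of $\leq \cap \leq_X$, which is precisely ad-$T_0$, then forces $x = y$.

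I do not expect any genuine obstacle: the whole argument is a matter of unwinding the definitions and using the elementary fact that $cl(\{x\}) = cl(\{y\})$ is equivalent to $x$ and $y$ being equivalent under the specialization preorder. The only care needed is to keep the two preorderings (specialization $\leq$ and the given $\leq_X$) distinct when combining them into the antisymmetry hypothesis of ad-$T_0$.
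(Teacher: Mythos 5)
Your proof is correct and follows essentially the same route as the paper's: both directions reduce to the observation that $(cl(\{x\}),[x]_X)=(cl(\{y\}),[y]_X)$ holds exactly when $x$ and $y$ are related both ways under both the specialization preorder and $\leq_X$, so that ad-$T_0$ is precisely the uniqueness clause of ad-sobriety. The only step you leave tacit is that $(cl(\{x\}),[x]_X)$ is indeed an irreducible pair (it is $\eta_X(x)$, which lands in $X^{ads}$), a fact the paper states explicitly before invoking the uniqueness clause.
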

\begin{proof}
  Let $X$ be ad-sober, with specialization preordering $\leq$.  Let
  $x$ and $y$ be two points of $X$ such that $x \leq y$, $y \leq x$,
  $x \leq_X y$ and $y \leq_X x$.  Writing $\dc$ for downward closure
  with respect to $\leq$, we know that $cl (\{x\}) = \dc x$.  Then
  $\eta_X (x) = (\dc x, [x]_X) = (\dc y, [y]_X) = \eta_X (y)$.  Both
  $\eta_X (x)$ and $\eta_X (y)$ are in $\adpt X$, hence are
  irreducible pairs, and therefore must be of the form
  $(\dc z, [z]_X)$ for a unique point $z$.  It follows that $x=z=y$.
  Therefore $X$ is ad-$T_0$.

  Conversely, let us assume that $X$ is ad-$T_0$, and that its only
  irreducible pairs are those of the form $(cl (\{x\}), [x]_X)$, or
  equivalently $(\dc x, [x]_X)$.  Then every irreducible pair is of
  that precise form, and we have to show that $x$ is unique.  But if
  $(\dc x, [x]_X) = (\dc y, [y]_X)$, then $x \leq y \leq x$ and
  $x \leq_X y \leq_X x$, so $x=y$ by ad-$T_0$ness.
\end{proof}

\begin{theorem}
  \label{thm:adpt:idemp}
  For every ad-frame
  $\OmegaL \eqdef (\Omega, L, \up{\tot, \con}, \down{\fof, \cou})$,
  $\adpt \OmegaL$ is ad-sober.  The $\adO \dashv \adpt$ adjunction is
  idempotent.  The ad-sobrification monad $\_^{ads}$ is idempotent.
\end{theorem}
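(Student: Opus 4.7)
The plan is to show that $\adpt \OmegaL$ is ad-sober for every ad-frame $\OmegaL$; idempotence of $\adO \dashv \adpt$ is then a standard consequence of the unit being an isomorphism on every object in the image of the right adjoint (equivalently, ad-sobriety of $\adpt \OmegaL$, by Definition and Lemma~\ref{deflem:adsober}), and idempotence of the monad $\_^{ads}$ follows in turn from idempotence of the adjunction.

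I would apply Lemma~\ref{lemma:adsober:aux}, reducing the task to: (i) $\adpt \OmegaL$ is ad-$T_0$, and (ii) every irreducible pair in $\adpt \OmegaL$ equals $(cl(\{(y, s)\}), [(y, s)]_X)$ for some point $(y, s)$. Step (i) is immediate: since the open sets of $\adpt \OmegaL$ are the $\Open_u$, the specialization preorder puts $(x, s) \leq (y, t)$ iff $x \subseteq y$, whereas the preordering $\leq_X$ compares only second components ($s \subseteq t$); if two points are identified by both relations and their opposites, the two components coincide separately.

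Step (ii) is the main work. Given an irreducible pair $(C, [(x, s)]_X)$, the natural candidate for a generic point is $(y, s)$ where
\[
y \eqdef \{u \in \Omega \mid \Open_u \cap C \neq \emptyset\}.
\]
Because $u \mapsto \Open_u$ is a surjective frame homomorphism (Lemma~\ref{lemma:adpt}) and $C$ is irreducible, $y$ is a completely prime filter of $\Omega$. The hard part will be verifying the point conditions of Definition~\ref{defn:adpt} for $(y, s)$. For each condition I would produce a witness in $C$ using one of the irreducibility clauses---\up{$C \cap \dc_X(x, s) \neq \emptyset$ or $C \subseteq cl(\upc_X(x, s))$ for the conditions on $\tot$ and $\con$}, \down{respectively $C \cap \upc_X(x, s) \neq \emptyset$ or $C \subseteq cl(\dc_X(x, s))$ for $\fof$ and $\cou$}---and then apply the point condition at that witness (which is itself a point of $\OmegaL$) to transfer the required membership or non-membership back to $(y, s)$, exploiting the inclusions between the second component of the witness and $s$.

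To finish, I would use surjectivity of $u \mapsto \Open_u$ to write the complement of $C$ as $\Open_w$ for some $w \in \Omega$, note that $w \notin y$ by construction, and then check both inclusions yielding $C = cl(\{(y, s)\})$: any $(z, r) \in C$ satisfies $z \subseteq y$ directly from the definition of $y$, and conversely any point $(z, r)$ with $z \subseteq y$ must have $w \notin z$, hence $(z, r) \in C$. Since the equivalence class $[\cdot]_X$ on $\adpt \OmegaL$ depends only on the second component, $[(x, s)]_X = [(y, s)]_X$, so the pair $(C, [(x, s)]_X)$ has the required form and Lemma~\ref{lemma:adsober:aux} closes the argument.
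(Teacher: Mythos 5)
Your proposal is correct and follows essentially the same route as the paper's proof: your filter $y = \{u \in \Omega \mid \Open_u \cap C \neq \emptyset\}$ is literally the paper's $x_0 = \bigcup_{(z,t) \in C} z$, and your recipe for the four point conditions (produce a witness in $C$ via the matching irreducibility clause, then transfer along the inclusion of second components) is exactly how the paper carries them out. The only cosmetic difference is at the end, where the paper first shows $(x_0, s) \in C$ and then invokes downward-closedness of the closed set $C$, while you check both inclusions of $C = cl(\{(y, s)\})$ directly through the complement $\Open_w$; both work.
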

\begin{proof}
  We only need to prove the first claim, as the others follow.  Let
  $X \eqdef \adpt \OmegaL$.  We recall (see
  Definition~\ref{defn:adpt}) that its open sets are the sets
  $\Open_u \eqdef \{(x, s) \in \adpt \OmegaL \mid u \in x\}$, where
  $u$ ranges over $\Omega$.  Hence its specialization preordering
  $\leq$ is given by $(x, s) \leq (y, t)$ if and only if for every $u
  \in \Omega$ such that $(x, s) \in \Open_u$, $(y, t) \in \Open_u$, if
  and only if for every $u \in x$, $u$ is in $y$, if and only if $x
  \subseteq y$.  We also recall that the preordering $\leq_X$ is given
  by $(x, s) \leq_X (y, t)$ if and only if $s \subseteq t$.
  Hence, if $(x, s) \leq (y, t)$, $(y, t) \leq (x, s)$, $(x, s) \leq_X
  (y, t)$ and $(y, t) \leq_X (x, s)$, then $x=y$ and $s=t$.  Therefore
  $X$ is ad-$T_0$.

  In order to show that $X$ is ad-sober, we use
  Lemma~\ref{lemma:adsober:aux}.  We consider an arbitrary irreducible
  pair $(C, [(x, s)]_X)$, and we show that it must be of the form
  $(\dc (x_0, s_0), [(x_0, s_0)]_X)$ for some point $(x_0, s_0) \in X$.  Here $\dc$ is
  downward-closure with respect to the specialization ordering $\leq$,
  which is inclusion of first components, as we have just seen.
  Since $[(x, s)]_X$ must be equal to $[(x_0, s_0)]_X$, we define
  $s_0$ as $s$.

  We define $x_0$ as $\bigcup_{(y, t) \in C} y$.  Let us verify that
  $x_0$ is a completely prime filter of elements of $\Omega$.  Being a
  union of completely prime filters, $x_0$ is upwards-closed and
  non-empty.  Showing that $x_0$ is closed under binary infima is a
  bit more complicated.  Let $u, v \in x_0$.  By definition of $x_0$,
  there are elements $(y, t), (y', t') \in C$ such that $u \in y$ and
  $v \in y'$.  Equivalently, $(y, t) \in \Open_u$ and
  $(y', t') \in \Open_v$.  Then $C$ intersects $\Open_u$ (at $(y, t)$)
  and $\Open_v$ (at $(y', t')$).  Since $C \in X^s$, it is
  irreducible, so $C$ intersects $\Open_u \cap \Open_v$, which happens
  to be $\Open_{u \wedge v}$ by Lemma~\ref{lemma:adpt}.  Therefore
  $u \wedge v$ belongs to $y''$ for some $(y'', t'') \in C$.  Since
  $y'' \subseteq x_0$, $u \wedge v \in x_0$.  Hence $x_0$ is a filter.
  Finally, we show that $x_0$ is completely prime.  Let
  ${(u_i)}_{i \in I}$ be an arbitrary family in $\Omega$ whose
  supremum is in $x_0$.  Then that supremum is in $y$ for some
  $(y, t) \in C$, and since $y$ is a completely prime filter, some
  $u_i$ must be in $y$, hence in $x_0$.

  Let us verify that $(x_0, s_0)$ is in $X = \adpt \OmegaL$, namely
  that it is a point of $\OmegaL$.  We must
  check six conditions (see Definition~\ref{defn:adpt}).
  \begin{itemize}
  \item We have just verified the first one, namely that $x_0$ is a
    completely prime filter of elements of $\Omega$.
  \item The second one, that $s_0$ is a completely prime complete
    filter of elements of $L$, is clear since $s_0=s$.
  \item \up{For the third one, let $(u, a) \in \tot$.  We wish to show
      that $u \in x_0$ or $a \in s_0$.  Since $(C, [(x, s)]_X)$ is an
      irreducible pair, $C$ intersects $\dc_X (x, s)$, say at
      $(y, t)$.  Since $(y, t)$ is a point of $\OmegaL$ and
      $(u, a) \in \tot$, we must have $u \in y$ or $a \in t$.  Since
      $(y, t) \in C$, by definition of $x_0$, we have
      $y \subseteq x_0$.  Since $(y, t) \in \dc_X (y, s)$, we have
      $t \subseteq s$.  If $u \in y$, then $y \in x_0$, and if
      $a \in t$, then $a \in s = s_0$.}
  \item \up{For the fourth one, let $(u, a) \in \con$.  We wish to
      show that $u \not\in x_0$ or $a \not\in s_0$.  Let us assume
      that $u \in x_0$ and $a \in s_0$, for the sake of contradiction.
      Since $u \in x_0$, there is a point $(y, t) \in C$ such that
      $u \in y$.  Hence $(y, t) \in \Open_u$.  Since $(C, [(x, s)]_X)$
      is an irreducible pair, $C \subseteq cl (\upc_X (x, s))$, where
      $cl$ denotes closure in $X$.  Hence $\Open_u$ intersects
      $cl (\upc_X (x, s))$, and therefore must also intersect
      $\upc_X (x, s)$.  Let $(y', t')$ be a point in that
      intersection.  Since $(y', t') \in \Open_u$, $u$ is in $y'$.
      Since $(y', t') \in \upc_X (x, s)$, we have
      $(x, s) \leq_X (y', t')$, so $s \subseteq t'$, and because
      $a \in s_0=s$, $a$ is in $t'$.  We have found a pair
      $(u, a) \in \con$ such that $u \in y'$ and $a \in t'$.  This is
      impossible since $(y', t')$ is a point of $\OmegaL$; indeed,
      that contradicts the fourth condition defining points
      (Definition~\ref{defn:adpt}).}
  \item \down{For the fifth one, let $(u, a) \in \fof$.  We wish to
      show that $u \in x_0$ or $a \not\in s_0$.  Equivalently, we
      assume that $a \in s_0$, and we claim that $u \in x_0$.  Since
      $(C, [(x, s)]_X)$ is an irreducible pair, $C$ intersects
      $\upc_X (x, s)$, say at $(y, t)$.  Since $(y, t)$ is a point of
      $\OmegaL$ and $(u, a) \in \fof$, we must have $u \in y$ or
      $a \not\in t$, or equivalently: $a \in t$ implies $u \in y$.  We
      have $a \in s_0 = s$, and $(x, s) \leq_X (y, t)$, so
      $s \subseteq t$, and therefore $a \in t$.  Therefore $u \in y$.
      Since $(y, t) \in C$, by definition of $x_0$, we have
      $y \subseteq x_0$, so $u \in x_0$.}
  \item \down{For the sixth one, let $(u, a) \in \cou$.  We wish to
      show that $u \not\in x_0$ or $a \in s_0$.  Equivalently, we
      assume that $u \in x_0$, and we wish to show that $a \in s_0$.
      Since $u \in x_0$, there is a point $(y, t) \in C$ such that
      $u \in y$.  Hence $(y, t) \in \Open_u$.  Since $(C, [(x, s)]_X)$
      is an irreducible pair, $C \subseteq cl (\dc_X (x, s))$.  Hence
      $\Open_u$ intersects $cl (\dc_X (x, s))$, and therefore also
      $\dc_X (x, s)$.  Let $(y', t')$ be a point in that intersection.
      Since $(y', t') \in \Open_u$, $u \in y'$.  Since
      $(y', t') \in \dc_X (x, s)$, we have $(y', t') \leq_X (x, s)$,
      so $t' \subseteq s$.  We have a pair $(u, a) \in \cou$, and
      $(y', t')$ is a point of $\OmegaL$.  By the sixth condition
      defining points, $u \not\in y'$ or $a \in t'$.  Since
      $u \in y'$, it follows that $a \in t'$, and then that $a \in s$.
      Since $s=s_0$, we obtain the desired conclusion that
      $a \in s_0$.}
  \end{itemize}

  Knowing that $(x_0, s_0)$ is a point of $X$, we show that it lies in
  $C$.  Since $C$ is closed, its complement is an open subset of $X$,
  necessarily of the form $\Open_u$ for some $u \in \Omega$.  If
  $(x_0, s_0)$ were not in $C$, it would be in $\Open_u$, so $u$ would
  be in $x_0$.  By definition of $x_0$, that would imply the existence
  of a point $(y, s) \in C$ such that $u \in y$.  The condition
  $u \in y$ rewrites as $(y, s) \in \Open_u$, which contradicts
  $(y, s) \in C$.

  We use this to show that $C = \dc (x_0, s_0)$.  Since $C$ is closed,
  it is in particular downwards-closed in the specialization
  preordering $\leq$; since $(x_0, s_0) \in C$,
  $\dc (x_0, s_0) \subseteq C$.  For the opposite inclusion, every
  point $(y, s)$ of $C$ is such that $y \subseteq x_0$ by definition
  of $x_0$, namely such that $(y, s) \leq (x_0, s_0)$.

  In summary, $C = \dc (x_0, s_0)$, where $s_0=s$.  Since additionally
  $(x_0, s_0)$ and $(x, s)$ have the same second component,
  $[(x_0, s_0)]_X = [(x, s)]_X$, so
  $(C, [(x, s)]_X) = (\dc (x_0, s_0), [(x_0, s_0)]_X)$.
\end{proof}

\section{The $\adO \dashv \adpt$ adjunction lifts the $\Open \dashv
  \pt$ adjunction}
\label{sec:ado-dashv-adpt-1}

We recall that there is a functor $|\_| \colon \PreTopcat \to \Topcat$
which maps every preordered topological space to its underlying
topological space, and maps every continuous order-preserving map $f$
to $f$ seen as a mere continuous map.

\begin{proposition}
  \label{prop:U:topol}
  The functor $|\_| \colon \PreTopcat \to \Topcat$ is topological.
\end{proposition}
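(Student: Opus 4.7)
The plan is to unwind the definition of a topological functor and exhibit the initial lifts directly. Recall that $|\_| \colon \PreTopcat \to \Topcat$ is topological precisely when every $|\_|$-structured source has a unique initial lift: given a topological space $X$ and a family of continuous maps ${(f_i \colon X \to |Y_i|)}_{i \in I}$ where each $Y_i$ is a preordered topological space, we need a unique preordering $\leq$ on $X$ that simultaneously (a) makes every $f_i$ monotonic from $(X, \leq)$ to $Y_i$, and (b) satisfies the universal property that any continuous map $g \colon |Z| \to X$ from another preordered topological space $Z$, such that every composite $f_i \circ g$ is monotonic, is itself monotonic from $Z$ to $(X, \leq)$.

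The candidate preordering is the obvious one: define $x \leq y$ on $X$ if and only if $f_i (x) \leq_{Y_i} f_i (y)$ for every $i \in I$. Reflexivity and transitivity are inherited pointwise from the preorderings $\leq_{Y_i}$, so $(X, \leq)$ is a preordered topological space in $\PreTopcat$; its underlying topological space is $X$ on the nose. Each $f_i$ is continuous by assumption and monotonic by construction, so $(X, \leq)$ together with the $f_i$ is a source in $\PreTopcat$ lifting the given structured source.

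For the universal property, suppose $Z$ is a preordered topological space and $g \colon |Z| \to X$ is a continuous map such that each $f_i \circ g \colon Z \to Y_i$ is monotonic. Then for $z \leq_Z z'$ in $Z$ and any $i \in I$, monotonicity of $f_i \circ g$ gives $f_i (g (z)) \leq_{Y_i} f_i (g (z'))$, so by definition $g (z) \leq g (z')$. Hence $g \colon Z \to (X, \leq)$ is monotonic, and since its continuity is unchanged it is a morphism in $\PreTopcat$. Uniqueness of the lifted preordering is the only point that requires any care, but it is standard: if $\leq'$ were another preordering on $X$ satisfying the same universal property, then applying the property of $(X, \leq)$ to the identity map $|(X, \leq')| \to X$ (which has each $f_i$ monotonic by assumption on $\leq'$) yields $\leq' \subseteq \leq$, and the symmetric argument yields $\leq \subseteq \leq'$, so $\leq = \leq'$.

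There is essentially no obstacle here: the construction of the preordering is forced and all verifications are pointwise. The only thing to be slightly careful about is the direction of the inclusions in the uniqueness argument, which is best carried out by applying the universal property of each candidate lift to the identity map whose source carries the other candidate preordering.
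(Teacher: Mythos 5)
Your proof is correct and follows essentially the same route as the paper: both construct the initial lift by equipping $X$ with the preordering $x \leq y$ iff $f_i(x) \leq_{Y_i} f_i(y)$ for every $i \in I$, and verify the universal property pointwise. The only cosmetic difference is that you obtain uniqueness of the lift by playing the universal property of the two candidates against each other via identity maps, whereas the paper packages the same fact as faithfulness plus amnesticity (the fiber over a space is the set of preorderings on it, ordered by inclusion, which is antisymmetric); both are standard.
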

\begin{proof}
  The functor $|\_|$ is faithful, evidently.  It follows that every
  morphism $g \colon |X| \to |Y|$ in $\Topcat$ lifts to at most one
  morphism $\overline g \colon X \to Y$, in the sense that
  $|\overline g|=g$; that happens exactly when $g$ is not only
  continuous but also monotonic with respect to $\leq_X$ and $\leq_Y$.
  
  Let us consider any $|\_|$-source
  ${(g_i \colon E \to |Y_i|)}_{i \in I}$, where each $g_i$ is a
  continuous map.  $E$ is a topological space, while each $Y_i$ is a
  preordered topological space.  We define a preordering $\leq_X$ on
  $E$ by declaring $x \leq_X x'$ if and only if
  $g_i (x) \leq_{Y_i} g_i (x')$ for every $i \in I$, and we call $X$
  the resulting preordered topological space.  (Hence $|X| = E$.)
  Then each $g_i$ lifts to a continuous pre-ordered map
  $\overline g_i \colon X \to Y_i$, in other words $g_i$ is monotonic
  with respect to $\leq_X$ and $\leq_{Y_i}$.  Additionally, for every
  continuous map $h \colon |Z| \to X$, $h$ lifts to a morphism from
  $Z$ to $\overline X$ if and only if $h$ is monotonic with respect to
  $\leq_Z$ and $\leq_X$, if and only if $g_i \circ h$ is monotonic
  with respect to $\leq_Z$ and $\leq_{Y_i}$ for every $i \in I$, if
  and only if $g_i \circ h$ lifts to $Z \to Y_i$ for every $i \in I$.
  Hence the maps $\overline g_i$ form a $|\_|$-initial lift.  We have
  shown that every $|\_|$-source has a (necessarily unique)
  $|\_|$-initial lift.

  For every topological space $E$, the $|\_|$-fiber of $E$ consists of
  those preordered topological spaces $X$ such that $|X| = E$.  This
  can be equated with the collection of preorderings on $E$.  This
  fiber is preordered by $X \leq Y$ if and only if the identity on $E$
  lifts to $X \to Y$.  Seeing $X$ and $Y$ are preorderings, $X \leq Y$
  if and only if $\leq_X$ is included in $\leq_Y$.  The inclusion
  preordering is asymmetric, and this is the property that defines
  $|\_|$ as an amnestic functor.  All these properties make $|\_|$ a
  topological functor.
\end{proof}
Topological functors are always left- and right-adjoint.  The
left-adjoint to $|\_|$ is the discrete functor $\Discr$, while the
right-adjoint is the indiscrete functor $\Ind$.

In particular, topological functors preserve all limits and all
colimits.  They also create all limits and colimits.  It follows
immediately that $\PreTopcat$ is complete and cocomplete.

Mimicking those constructions, there also a functor, which I will
also write as $|\_|$, from $\adFrm$ to $\Frm$, and therefore from
$\adFrm^{op}$ to $\Frm^{op}$.  On objects, it maps every ad-frame
$(\Omega, L, \tot, \con)$ to $\Omega$.  On morphisms, it maps every
ad-frame homomorphism $(\varphi, p)$ to $\varphi$.

It is unlikely that $|\_|$ is topological; in fact it is unlikely that
$|\_|$ is faithful.  But there is an analogue of the indiscrete
functor, which I will write as $\Ind$, just as with (preordered)
topological spaces.  We say that a frame is \emph{trivial} if it
contains just one element (namely, $\bot=\top$), and
\emph{non-trivial} otherwise.  We will restrict to non-trivial frames
below, for simplicity.  The only trivial frame is the one-element
frame, $\adO (\emptyset)$.
\begin{deflem}
  \label{deflem:Ind:loc}
  For every non-trivial frame $\Omega$, let $\Ind (\Omega)$ be
  $(\Omega, \{0, 1\}, \allowbreak \up{\tot_\Ind, \allowbreak
    \con_\Ind}, \down{\fof_\Ind, \allowbreak \cou_\Ind})$ where
  $0 < 1$, 
  \up{the pairs $(u, a)$ in $\tot_\Ind$ are those such that $u$ is the
    largest element $\top$ of $\Omega$ or $a=1$, the pairs
    $(u, a) \in \con_\Ind$ are those such that $u$ is the least
    element $\bot$ of $\Omega$ or $a=0$}, \down{the pairs
    $(u, a) \in \fof_\Ind$ are those such that $u=\top$ or $a=0$, the
    pairs $(u, a) \in \cou_\Ind$ are those such that $u=\bot$ or
    $a=1$}.  Then $\Ind (\Omega)$ is an ad-frame.
\end{deflem}
\begin{proof}
  It is clear that $\{0, 1\}$ is a completely distributive complete
  lattice.  We take the notations $\sqsubseteq$, $\logleq$, etc., from
  Section~\ref{sec:ad-frames}.  Then:
  \begin{itemize}
  \item \up{$\tot_\Ind$ is $\sqsubseteq$-upwards-closed.  If
      $(u, a) \in \tot_\Ind$, then either $u = \top$ and any larger
      pair will have $\top$ as its first component, or $a=1$ and any
      larger pair will have $1$ as its second component.}
  \item \down{$\fof_\Ind$ is $\logleq$-upwards-closed.  If
      $(u, a) \in \fof_\Ind$, then either $u=\top$ and any
      $\logleq$-larger pair will have $\top$ as its first component,
      or $a=0$ and any $\logleq$-larger pair will have $0$ as its
      second component.}
  \item \up{$\tot_\Ind$ contains $\ff \eqdef (\bot, 1)$ and
      $\tt \eqdef (\top, 0)$.}
  \item \down{$\fof_\Ind$ contains $(\bot, 0)$ and $(\top, 1)$.  This
      is clear.}
  \item \up{For all $(u, a), (v, b) \in
      \tot_\Ind$, we wish to show that $(u, a) \logwedge (v, b) \eqdef
      (u \wedge v, a \vee b)$ is in $\tot_\Ind$.  This is true if $a
      \vee b=1$.  Otherwise, $a \vee b=0$, so
      $a=b=0$, hence $u=v=\top$, and therefore $u \wedge v = \top$.}
  \item \down{For all $(u, a), (v, b) \in
      \fof_\Ind$, we wish to show that $(u, a) \sqcap (v, b) = (u
      \wedge v, a \wedge b)$ is in $\fof_\Ind$.  If $a=0$ or
      $b=0$, then $a \wedge
      b=0$, and this is clear.  Otherwise, $u=\top$ and
      $v=\top$, so $u \wedge v=\top$, and this is clear as well.}
  \item \up{For every family of pairs $(u_i, a_i) \in
      \tot_\Ind$, we claim that $\biglogvee_{i \in I} (u_i, a_i)
      \eqdef (\bigvee_{i \in I} u_i, \bigwedge_{i \in I}
      a_i)$ is in $\tot_\Ind$.  This is true if $\bigwedge_{i \in I}
      a_i=1$.  Otherwise, some $a_i$ is equal to
      $0$.  Then the corresponding $u_i$ is equal to
      $\top$, and then $\bigvee_{i \in I} u_i = \top$.}
  \item \down{For every family of pairs $(u_i, a_i) \in
      \fof_\Ind$, we claim that $\bigsqcup_{i \in I} (u_i, a_i) \eqdef
      (\bigvee_{i \in I} u_i, \bigvee_{i \in I}
      a_i)$ is in $\fof_\Ind$.  If $a_i=0$ for some $i \in
      I$, this is clear, since $\bigvee_{i \in I} a_i =
      0$.  Otherwise, $u_i=\top$ for every $i \in I$, so $\bigvee_{i
        \in I} u_i = \top$.}
  \item \up{$\con_\Ind$ is $\sqsubseteq$-Scott-closed.  If $(u, a)
      \sqsubseteq (v, b) \in \con_\Ind$, then either
      $v=\bot$ and $u=\bot$, or $b=0$ and $a=0$, so $(u, a) \in
      \con_\Ind$.  Let ${(u_i, a_i)}_{i \in
        I}$ be a directed family in $\con_\Ind$, $u \eqdef \bigvee_{i
        \in I} u_i$, $a \eqdef \bigvee_{i \in I} a_i$.  If
      $u=\bot$ or if $a=0$, then $(u, a) \in
      \con_\Ind$.  Otherwise, some $u_i$ is different from
      $\bot$ and some $a_j$ is equal to
      $1$.  By directedness, there is an index $k \in
      I$ such that $(u_i, a_i), (u_j, a_j) \sqsubseteq (u_k,
      a_k)$.  Hence $u_k \neq \bot$ and
      $a_k=1$.  But this is impossible since $(u_k, a_k) \in
      \con_\Ind$.}
  \item \down{$\cou_\Ind$ is $\logleq$-Scott-closed.  If
      $(u, a) \logleq (v, b) \in \cou_\Ind$, then either $v=\bot$ and
      then $u=\bot$ since $u \leq v$, or $b=1$ and then $a=1$ since
      $a \geq b$.  For every $\logleq$-directed family
      ${(u_i, a_i)}_{i \in I}$ of elements of $\cou_\Ind$, with
      $\logleq$-supremum $(u, a)$, namely $u = \bigvee_{i \in I} u_i$
      and $a = \bigwedge_{i \in I} a_i$, we claim that $u=\bot$ or
      $a=1$.  Otherwise, $u \neq \bot$, which implies that
      $u_i \neq \bot$ for some $i \in I$, and $a \neq 1$, which
      implies that $a_j \neq 1$ for some $j \in I$.  By directedness,
      there is a $k \in I$ such that
      $(u_i, a_i), (u_j, a_j) \logleq (u_k, a_k)$.  In particular,
      $u_i \leq u_k$ and $a_j \geq a_k$.  Since
      $(u_k, a_k) \in \cou_\Ind$, $u_k = \bot$ or $a_k = 1$.  But
      $u_k=\bot$ would imply $u_i=\bot$, which is impossible, and
      $a_k=1$ would imply $a_j=1$, which is impossible.}
  \item \up{$\con_\Ind$ contains $\ff = (\bot, 1)$ and
      $\tt \eqdef (\top, 0)$.}
  \item \down{$\cou_\Ind$ contains $(\bot, 0)$ and $(\top, 1)$.  This
      is clear.}
  \item \up{For all $(u, a), (v, b) \in \con_\Ind$, we wish to show
      that $(u, a) \logwedge (v, b) = (u \wedge v, a \vee b)$ is in
      $\con_\Ind$.  This is true if $a \vee b=0$.  Otherwise, $a=1$ or
      $b=1$.  If $a=1$ then $u=0$, so $u \wedge v = 0$, and if $b=1$
      then $v=0$ so $u \wedge v=0$, too.}
  \item \down{For all $(u, a), (v, b) \in \cou_\Ind$, we wish to show
      that $(u, a) \sqcap (v, b) = (u \wedge v, a \wedge b)$ is in
      $\cou_\Ind$.  If $a \wedge b = 1$, this is clear.  Otherwise,
      $a \wedge b = 0$, so $a=0$ or $b=0$.  If $a=0$, then $u=\bot$,
      so $u \wedge v = \bot$, and similarly if $b=0$, then $v=\bot$,
      so $u \wedge v = \bot$.}
  \item \up{For every family of pairs $(u_i, a_i) \in \con_\Ind$, we
      claim that
      $\biglogvee_{i \in I} (u_i, a_i) = (\bigvee_{i \in I} u_i,
      \bigwedge_{i \in I} a_i)$ is in $\con_\Ind$.  This is true if
      $\bigwedge_{i \in I} a_i=0$.  Otherwise, every $a_i$ is equal to
      $1$, so every $u_i$ is equal to $\bot$.  Then
      $\bigvee_{i \in I} u_i = \bot$.}
  \item \down{For every family of pairs $(u_i, a_i) \in \cou_\Ind$, we
      claim that
      $\bigsqcup_{i \in I} (u_i, a_i) = (\bigvee_{i \in I} u_i,
      \bigvee_{i \in I} a_i)$ is in $\cou_\Ind$.  This is true if
      $\bigvee_{i \in I} a_i =1$.  Otherwise,
      $\bigvee_{i \in I} a_i=0$, and therefore $a_i=0$ for every
      $i \in I$.  Since $(u_i, a_i) \in \cou_\Ind$, this entails that
      $u_i = \bot$ for every $i \in I$, hence
      $\bigvee_{i \in I} u_i = \bot$.}
  \item The interaction laws.
    \begin{itemize}
    \item \up{Let $(u, a) \in \con_\Ind$ and $(v, b) \in \tot_\Ind$.
        We wish to show that if $u=v$ then $a \leq b$, and that if
        $a=b$ then $u \leq v$.  We first assume $u=v$.  We have
        $u=\bot$ or $a=0$ (since $(u, a) \in \con_\Ind$), and $v=\top$
        or $b=1$ (since $(v, b) \in \tot_\Ind$).  If $a \leq b$
        failed, we would have $a=1$ and $b=0$, hence $u = \bot$ and
        $v = \top$, contradicting $u=v$.  Second, we assume $a=b$.  If
        $a=b=0$, then since $(v, b) \in \tot_\Ind$, $v=\top$ and hence
        $u \leq v$, trivially.  If $a=b=1$, then since
        $(u, a) \in \con_\Ind$, $u=\top$ and therefore $u \leq v$,
        trivially.}
    \item \down{Let $(u, a) \in \cou_\Ind$ and $(v, b) \in \fof_\Ind$.
        We wish to show that if $u=v$ then $a \geq b$, and that if
        $a=b$ then $u \leq v$.  We first assume $u=v$.  If $a \geq b$
        failed, we would have $a=0$ and $b=1$.  But since
        $(u, a) \in \cou_\Ind$, $a=0$ would entail $u = \bot$, and
        since $(v, b) \in \fof_\Ind$, $b=1$ would entail $v = \top$,
        and this contradicts $u=v$.  Next, we assume that $a=b$.  If
        $a=b=0$, then $(u, a) \in \cou_\Ind$ entails that $u=\bot$, so
        $u \leq v$.  If $a=b=1$, then $(v, b) \in \fof_\Ind$ entails
        that $v=\top$, so $u \leq v$.}
    \item \both{Let $(u, a) \in \con_\Ind \cap \cou_\Ind$.  Then
        $u=\bot$ or $a=0$, and $u=\bot$ or $a=1$.  Whether $a$ is
        equal to $0$ or $1$, we must have $u=\bot$.}
    \item \both{Let $(v, b) \in \tot_\Ind \cap \fof_\Ind$.  Then
        $v=\top$ or $b=1$, and $v=\top$ or $b=0$.  In any case,
        $v=\top$.}
    \item \both{Let $(v, b) \in \con_\Ind \cap \fof_\Ind$.  Then
        $v=\bot$ or $b=0$, and $v=\top$ or $b=0$.  In any case, $b=0$.
        Indeed $b \neq 0$ would entail
        $v=\bot$ and $v=\top$, which is impossible since $\Omega$ is non-trivial.}
    \item \both{Let $(u, a) \in \tot_\Ind \cap \cou_\Ind$.  Then
        $u=\top$ or $a=1$, and $u=\bot$ or $a=1$.  In any case, $a=1$.
        Indeed $a \neq 1$ would entail $u=\top$ and $u=\bot$, which is
        impossible since $\Omega$ is non-trivial.}
    \end{itemize}
  \end{itemize}
\end{proof}

Let $\Frm_*$ be the full subcategory of $\Frm$ consisting of
non-trivial frames.  Let us say that an ad-frame
$\OmegaL \eqdef (\Omega, L, \up{\tot, \con}, \down{\fof, \cou})$ is
\emph{non-trivial} if and only if $\Omega$ is non-trivial.  We let
$\adFrm_*$ be the full subcategory of $\adFrm$ consisting of
non-trivial ad-frames.

\begin{deflem}
  \label{deflem:Ind:loc:mor}
  There is a functor $\Ind \colon \Frm_* \to \adFrm_*$ that is defined on
  objects as in Definition and Lemma~\ref{deflem:Ind:loc}, and maps
  every frame homomorphism $\psi \colon \Omega \to \Omega'$ to
  $(\psi, \identity {\{0, 1\}})$.
\end{deflem}
\begin{proof}
%
  We need to show that $(\psi, \identity {\{0, 1\}})$ is an ad-frame
  homomorphism.  Clearly, $\psi$ is a frame homomorphism and
  $\identity {\{0, 1\}}$ is a complete lattice homomorphism.  \up{For
    all $u \in \Omega$ and $a \in \{0, 1\}$, if $u=\top$ or $a=1$ then
    $\psi (u)=\top$ (since $\psi$ maps $\top$ to $\top$) or $a=1$,
    showing that $(\psi, \identity {\{0, 1\}})$ preserves totality; if
    $u=\bot$ or $a=0$ then $\psi (u)=\bot$ (since $\psi$ maps $\bot$
    to $\bot$) or $a=0$, showing that $(\psi, \identity {\{0, 1\}})$
    preserves consistency.}  \down{If $u=\top$ or $a=0$, then
    $\psi (u)=\top$ or $a=0$, showing that
    $(\psi, \identity {\{0, 1\}})$ preserves containment.  If $u=\bot$
    or $a=1$, then $\psi (u)=\bot$ or $a=1$, showing that
    $(\psi, \identity {\{0, 1\}})$ preserves inclusion.}


  The fact that $\Ind$ preserves identities and composition is trivial.
\end{proof}


\begin{deflem}
  \label{deflem:epsilon:loc}
  For every non-trivial ad-frame
  $\OmegaL \eqdef (\Omega, L, \up{\tot, \con}, \allowbreak
  \down{\fof, \cou})$, there is an ad-frame homomorphism
  $\epsilon_{\OmegaL} \colon \Ind {|\OmegaL|} \to \OmegaL$ defined as
  $(\identity \Omega, bnd_L)$, where $bnd_L \colon \{0, 1\} \to L$
  maps $0$ to the least element $\bot$ of $L$ and $1$ to the largest
  element $\top$ of $L$.
\end{deflem}
\begin{proof}
  The identity map $\identity \Omega$ is a frame homomorphism.  It is
  clear that $bnd_L$ is a complete lattice homomorphism.

  \up{Let us show that $\epsilon_{\OmegaL}$ preserves totality.  Let
    $(u, a)$ be a total pair in $\Ind {|\OmegaL|}$, namely $u=\top$ or
    $a=1$.  Then $\epsilon_{\OmegaL} (u, a) = (u, bnd_L (a))$.  If
    $a=1$, then the latter is equal to $(u, \top)$.  But
    $\ff \eqdef (\bot, \top)$ is in $\tot$, and $\tot$ is
    $\sqsubseteq$-upwards-closed, where $\sqsubseteq$ is the
    information ordering $\leq \times \leq$.  Hence
    $\ff \sqsubseteq (u, \top)$, so $(u, \top) \in \tot$.  If $a=0$,
    then $u=\top$, so
    $\epsilon_{\OmegaL} (u, a) = (\top, \bot) = \tru$, which is in
    $\tot$.  In both cases, $\epsilon_{\OmegaL} (u, a) \in \top$.}

  \up{We now show that $\epsilon_{\OmegaL}$ preserves consistency.
    Let $(u, a)$ be a consistent pair in $\Ind {|\OmegaL|}$, namely
    $u=\bot$ or $a=0$.  Then
    $\epsilon_{\OmegaL} (u, a) = (u, bnd_L (a))$.  If $a=1$, then
    $u=\bot$, and the latter is equal to $(\bot, \top) = \ff$, which
    is in $\con$.  If $a=0$, then
    $\epsilon_{\OmegaL} (u, a) = (u, \bot) \sqsubseteq (\top, \bot) =
    \tt$.  Since $\tt \in \con$ and $\con$ is Scott-closed, in
    particular downwards closed with respect to $\sqsubseteq$,
    $\epsilon_{\OmegaL} (u, a) \in \con$.}

  \down{Next, we show that $\epsilon_{\OmegaL}$ preserves containment.
    Let $(u, a)$ be a pair related by containment in
    $\Ind {|\OmegaL|}$, namely $u = \top$ or $a=0$.  We wish to show
    that $(u, bnd_L (a)) \in \fof$.  If $a=0$, then
    $(u, bnd_L (a)) = (u, \bot)$.  Since $(\bot, \bot) \in \fof$ and
    $\fof$ is $\logleq$-upwards-closed (where $\logleq$ is the logical
    ordering $\leq \times \geq$), $(u, \bot)$ is also in $\fof$.  If
    $a=1$, then $u=\top$, so $(u, bnd_L (a)) = (\top, \top)$, which is
    in $\fof$.}

  \down{Finally, we show that $\epsilon_{\OmegaL}$ preserves
    inclusion.  Let $(u, a)$ be a pair related by inclusion in
    $\Ind {|\OmegaL|}$, namely $u=\bot$ or $a=1$.  If $a=0$, then
    $u=\bot$, so $(u, bnd_L (a)) = (\bot, \bot)$, which is in $\cou$.
    If $a=1$, then $(u, bnd_L (a)) = (u, \top)$.  But
    $(u, \top) \logleq (\top, \top) \in \cou$, and $\cou$ is
    $\logleq$-Scott-closed, hence $\logleq$-downwards-closed, whence
    $(u, \top) \in \cou$.}
\end{proof}

\begin{remark}
  \label{rem:bnd}
  The map $bnd_L \colon \{0, 1\} \to L$ is the \emph{unique} complete
  lattice homomorphism from $\{0, 1\}$ to $L$, since any such
  homomorphism must map $0$ to $\bot$ and $1$ to $\top$.
\end{remark}

\begin{proposition}
  \label{prop:Ind:adj:loc}
  For every non-trivial frame $\Omega'$, for every non-trivial
  ad-frame
  $\OmegaL \eqdef (\Omega, L, \up{\tot, \con}, \down{\fof, \cou})$,
  for every ad-frame homomorphism
  $(\varphi, p) \colon \Ind (\Omega') \to \OmegaL$, there is a unique
  frame homomorphism $\psi \colon \Omega' \to |\OmegaL|$ such that
  $\epsilon_{\OmegaL} \circ \Ind (\psi) = (\varphi, p)$.

  Hence there is an adjunction
  $\adFrm_*^{op} \colon|\_| \dashv \Ind \colon \Frm_*^{op}$, with unit
  $\epsilon$.
\end{proposition}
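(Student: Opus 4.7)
My plan is to exploit Remark~\ref{rem:bnd}, which guarantees that $bnd_L$ is the \emph{unique} complete lattice homomorphism from $\{0,1\}$ to $L$. Since the second component $p$ of any ad-frame homomorphism from $\Ind(\Omega')$ to $\OmegaL$ is a complete lattice homomorphism from $\{0,1\}$ to $L$, this forces $p = bnd_L$. The entire nontrivial information carried by $(\varphi, p)$ therefore lies in the first component $\varphi$, which pins down what $\psi$ must be.

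The first step will be to define $\psi \eqdef \varphi$; this is a frame homomorphism from $\Omega'$ to $|\OmegaL| = \Omega$ by hypothesis. I would then compute $\epsilon_{\OmegaL} \circ \Ind(\psi)$: by Definition and Lemma~\ref{deflem:Ind:loc:mor}, $\Ind(\psi) = (\varphi, \identity{\{0,1\}})$, and composing componentwise with $\epsilon_{\OmegaL} = (\identity\Omega, bnd_L)$ yields $(\varphi, bnd_L)$, which equals $(\varphi, p)$ by Remark~\ref{rem:bnd}. For uniqueness, I would observe that any $\psi'$ satisfying $\epsilon_{\OmegaL} \circ \Ind(\psi') = (\varphi, p)$ must give $(\psi', bnd_L)$ on the left by the same componentwise computation, so comparing first components immediately forces $\psi' = \varphi$.

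The concluding adjunction statement is then formal. The universal property just established is exactly that of a counit for $\Ind \dashv |\_|$ in the original categories $\Frm_*$ and $\adFrm_*$; transposing to the opposite categories turns the counit into the unit of the stated adjunction $|\_| \dashv \Ind$ between $\adFrm_*^{op}$ and $\Frm_*^{op}$. Honestly, the main obstacle is not really an obstacle at all: the entire argument is driven by the initiality of $\{0,1\}$ in the category of complete lattices with complete lattice homomorphisms, which collapses the universal property into a trivial check on first components. The non-triviality hypothesis on $\Omega$ plays no role here beyond ensuring that $\Ind(\Omega')$ and $\epsilon_{\OmegaL}$ have been defined.
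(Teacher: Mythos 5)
Your proposal is correct and follows essentially the same route as the paper's proof: both arguments reduce everything to the observation (Remark~\ref{rem:bnd}) that $bnd_L$ is the unique complete lattice homomorphism $\{0,1\}\to L$, so that $\epsilon_{\OmegaL}\circ\Ind(\psi)=(\psi,bnd_L)$ forces $\psi=\varphi$, and the adjunction then follows by reversing arrows. Your version just spells out the componentwise composition and the counit-to-unit translation a little more explicitly than the paper does.
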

\begin{proof}
%
%
  Let us turn to the existence and uniqueness of $\psi$.  The
  condition $\epsilon_{\OmegaL} \circ \Ind (\psi) = (\varphi, p)$
  means that $(\psi, bnd_L) = (\varphi, p)$.  Uniqueness follows
  immediately.  In order to show existence, we first note that
  $p=bnd_L$ by Remark~\ref{rem:bnd}, and we define $\psi$ as
  $\varphi$.

  The last part of the proposition follows by reversing arrows.
\end{proof}

Let $\Topcat_*$ be the full subcategory of $\Topcat$ consisting of
non-empty topological spaces, and $\PreTopcat_*$ be the full
subcategory of $\PreTopcat$ consisting of non-empty preordered
topological spaces.
\begin{theorem}
  \label{thm:lift}
  The following square commutes up to isomorphism:
  \[
    \xymatrix{
      \PreTopcat_*
      \ar[r]^{\adO} \ar@{}[r]
      &
      \adFrm_*^{op}
      \\
      \Topcat_*
      \ar[r]^{\Open} \ar@{}[r]
      \ar[u]_{\Ind}
      & \Frm_*^{op}
      \ar[u]_{\Ind}
    }
  \]
\end{theorem}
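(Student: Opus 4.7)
The plan is to exhibit, for each non-empty topological space $X$, a canonical ad-frame isomorphism $\alpha_X \colon \adO(\Ind X) \to \Ind(\Open X)$ in $\adFrm_*$, and then verify naturality in $X$; passing to the opposite category gives the desired commutativity of the square up to natural isomorphism.

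The key structural observation I will exploit is that the $\leq$-upwards-closed subsets of $\Ind X$, with respect to the indiscrete preordering, are precisely $\emptyset$ and $X$, and non-emptiness of $X$ makes these distinct. Hence the lattice component $L$ of $\adO(\Ind X)$ is a two-element complete lattice, canonically isomorphic to $\{0, 1\}$ via $\beta_X \colon \emptyset \mapsto 0$, $X \mapsto 1$. I set $\alpha_X \eqdef (\identity{\Open X}, \beta_X)$ and verify it is an ad-frame homomorphism by a case split on $A \in \{\emptyset, X\}$. For example, $(U, A) \in \tot$ in $\adO(\Ind X)$ means $U \cup A = X$, equivalent to ``$U = \top$ or $A = X$'', equivalent to ``$U = \top$ or $\beta_X(A) = 1$'', which is membership in $\tot_\Ind$; the relations $\con$, $\fof$, $\cou$ are handled by the same two-case analysis. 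Since both $\identity{\Open X}$ and $\beta_X$ are isomorphisms in their respective categories, $\alpha_X$ is an isomorphism in $\adFrm_*$. Equivalently, its inverse coincides with the counit $\epsilon_{\adO(\Ind X)}$ of Lemma~\ref{deflem:epsilon:loc}, which is an isomorphism exactly because its second component $bnd_L$ is bijective when $L$ has two elements.

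For naturality, I take a continuous map $f \colon X \to Y$ between non-empty spaces. Lemma~\ref{lemma:ad:mor} gives $\adO(\Ind f) = (f^{-1}, f^{-1})$ and Lemma~\ref{deflem:Ind:loc:mor} gives $\Ind(\Open f) = (f^{-1}, \identity{\{0, 1\}})$. Both composites $\alpha_X \circ \adO(\Ind f)$ and $\Ind(\Open f) \circ \alpha_Y$ therefore have first component $f^{-1}$. For the second components, I use that $f^{-1} \colon L_Y \to L_X$ sends $\emptyset \mapsto \emptyset$ and $Y \mapsto X$ (here non-emptiness of $X$ matters again), which gives $\beta_X \circ f^{-1} = \identity{\{0, 1\}} \circ \beta_Y$ as maps $L_Y \to \{0, 1\}$; this is the desired naturality square.

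I expect no serious obstacle: the entire content is the observation that the indiscrete preordering collapses $L$ to two elements, so $\adO(\Ind X)$ agrees with the ``minimal enrichment'' $\Ind(\Open X)$ that Lemma~\ref{deflem:Ind:loc} is designed to produce. The only book-keeping point will be to invoke the non-emptiness hypothesis on $X$ at every step where I need $\emptyset \neq X$, both so that $L$ is genuinely two-element and so that all objects in sight lie in the non-trivial subcategories $\adFrm_*$ and $\Frm_*$.
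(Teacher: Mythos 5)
Your proposal is correct and follows essentially the same route as the paper: both reduce the claim to the observation that the $\leq$-upwards-closed subsets of $\Ind X$ are exactly $\emptyset$ and $X$, so that the lattice component of $\adO(\Ind X)$ is the two-element lattice and the four relations collapse to $\tot_\Ind$, $\con_\Ind$, $\fof_\Ind$, $\cou_\Ind$ by the same case split on $A \in \{\emptyset, X\}$. You are in fact slightly more complete than the paper, which does not spell out the naturality of the isomorphism or its identification with $\epsilon_{\adO(\Ind X)}$.
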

\begin{proof}
  Starting from a non-empty topological space $X$, $\adO {\Ind X}$ is
  the non-trivial ad-frame
  $(\Open X, \{\emptyset, \allowbreak X\}, \allowbreak \tot, \con)$,
  where \up{the pairs $(U, A) \in \Open X \times \{\emptyset, X\}$
    that are in $\tot$ are those such that $U \cup A = X$, those in
    $\con$ are those such that $U \cap A = \emptyset$}, \down{those in
    $\fof$ are those such that $U \supseteq A$ and those in $\cou$ are
    those such that $U \subseteq A$}.  We note that \up{$U \cup A = X$
    if and only if $U=X$ or $A=X$: if $A \neq X$, then $A$ is empty,
    and then $U \cup A = X$ simplifies to $U=X$.  Also,
    $U \cap A = \emptyset$ if and only if $U=\empty$ or $A=\emptyset$:
    if $A \neq \emptyset$, then $A=X$, and $U \cap A = \emptyset$
    simplifies to $U=\emptyset$.}  \down{Next, $U \supseteq A$ if and
    only if $U=X$ or $A=\emptyset$, and $U \subseteq A$ if and only if
    $U=\emptyset$ or $A=X$.}

  Hence
  $(\Open X, \{\emptyset, X\}, \up{\tot, \con}, \down{\fof, \cou})$ is
  isomorphic to
  $(\Open X, \{0, 1\}, \up{\tot_\Ind, \allowbreak \con_\Ind},
  \allowbreak \down{\fof_\Ind, \cou_\Ind})$, namely to
  $\Ind (\Open X)$.  This shows that the composition of the southwest
  $\to$ northwest $\to$ northeast arrows coincides, up to isomorphism,
  with the composition of the southwest $\to$ southeast $\to$
  northeast arrows.
\end{proof}

\begin{remark}
  \label{rem:lift}
  One may wonder whether one would have a commuting square of arrows
  going in the other direction, namely $|\adpt| \cong \pt |\_|$.  That
  would mean that $|\adpt \OmegaL|$ is (naturally) isomorphic to
  $\pt |\OmegaL|$ for every ad-frame $\OmegaL$, but this would be
  wrong.  This is already wrong when $\OmegaL = \adO X$ for a
  preordered topological space $X$.  Indeed, in that case
  $|\adpt \OmegaL| \cong |X^{ads}|$, while
  $\pt |\OmegaL| \cong |X|^s$, and any non-sober space with the
  discrete preordering will be a counterexample by
  Remark~\ref{rem:|ads|}.
\end{remark}

\section{Conclusion}
\label{sec:conclusion}

We have obtained an adjunction $\adO \dashv \adpt$ between
$\PreTopcat$ and $\adFrm^{op}$, or rather three, \up{red}, \down{blue}
and \both{both}.  Just like van der Schaaf's
\citep{vdS:ordered:locales, HvdS:ordered:locales}, it lifts the usual
$\Open \dashv \pt$ adjunction, under the proviso that we restrict it
to non-empty topological spaces and non-trivial ad-frames.  This
proviso is a minor defect of the theory, and one that I think we can
live with.

The adjunction $\adO \dashv \adpt$  shares many nice properties with
the $\Open \dashv \pt$ adjunction.  Notably, we have shown that there
is a notion of ad-sobrification, which is idempotent (equivalently, the
monad is idempotent).

Based on Remark~\ref{rem:semiclosed} and
Remark~\ref{rem:semiclosed:pt}, $\adO \dashv \adpt$ specializes to an
adjunction between \up{upper semi-closed} (resp.\ \down{lower
  semi-closed}, resp.\ \both{semi-closed}) preordered topological
spaces and ad-frames satisfying property \up{(usc)}, resp.\
\down{(lsc)}, resp.\ \both{both}.  One part of the theory that bothers
me is that it does not seem that we can characterize \emph{pospaces}
this way, that is, ordered topological spaces $X$ with an ordering
whose graph is closed in $X \times X$.  In a sense, ad-frames miss a
way of speaking of the ordering relation $\leq$ itself, and only has
access to upwards-closed and downwards-closed subsets instead.

One thing that we have not developed here is a theory of
\emph{ad-locales}.  Just like locales are just frames, with morphisms
reversed, ad-locales are just ad-frames with morphisms reversed.
There is an extensive program of translating known theorems on ordered
topological spaces to analogous theorems on ad-locales, and in the
process, making them \emph{pointfree}.  The hope is that the pointfree
analogues of known theorems would be constructive, paralleling a
program that has been set up and followed by Isbell, Johnstone,
Banaschewski, Escard\'o, and others in the unordered setting.  One
may for example look at results of \citet{Lawson:chains} on chains in
partially ordered spaces for a start.

One may also hope that a theory of ad-locales would contain some new
theorems, with no analogue in the non-pointfree setting, such as
Isbell's density theorem \citep{Isbell:density}.

My own interest in the field is one of curiosity, and I would like to
extend the theory developed here to finding Stone-like dualities for
\emph{streams}, a model of topological spaces with \emph{local}
preorderings \citep{Krishnan:convenient}.

Finally, Stone-like dualities are deeply connected to logic, and one
should investigate what the logic of ad-frames is.  The natural
inspiration would be to follow \citet[Chapter~6]{Jakl:dframes}.  One
should also investigate connections with intuitionistic S4 logic
\citep{WZ:int:modal}, where intuitionism would be about the topology
and the S4 modalities would be about the ordering.

\section{Acknowledgments}
\label{sec:acknowledgments}

Thanks to Nesta van der Schaaf for discussing ordered locales with me.
We also talked about timespaces in general relativity; since I know
essentially nothing about them, I refrained from mentioning them in
this paper, and especially why pointless timespaces are important
there: see Nesta's PhD thesis \citep{vdS:ordered:locales}.



\DeclareRobustCommand{\VAN}[3]{#3}

\bibliographystyle{apalike}
\bibliography{pspdual}
\end{document}